\newtheorem{thm}{Theorem}[section]
\theoremstyle {definition}
\newtheorem{cor}[thm]{Corollary}
\newtheorem{prop}[thm]{Proposition}
\newtheorem{lem}[thm]{Lemma}
\numberwithin{equation}{section}
\begin{document}

\title[The annihilating-submodule graph of modules]
{The annihilating-submodule
 graph of modules over commutative rings II}%
\author{Habibollah Ansari-Toroghy }%

\author{Shokoufeh Habibi}%

%\thanks{}%
\subjclass [2010] {primary
05C75, secondary 13C13}%
\keywords{ annihilating-submodule graph, cyclic module, minimal
prime submodule, chromatic and clique
number.}%

\date{\today}%
%\dedicatory{}%
%\commby{}%
% ----------------------------------------------------------------
\begin{abstract}
Let $M$ be a module over a commutative ring $R$. In this paper, we
continue our study of annihilating-submodule graph $AG(M)$ which
was introduced in (The Zariski topology-graph of modules over
commutative rings, Comm. Algebra., 42 (2014), 3283--3296). $AG(M)$
is a (undirected) graph in which a nonzero submodule $N$ of $M$ is
a vertex if and only if there exists a nonzero proper submodule
$K$ of $M$ such that $NK=(0)$, where $NK$, the product of $N$ and
$K$, is defined by $(N:M)(K:M)M$ and two distinct vertices $N$ and
$K$ are adjacent if and only if $NK=(0)$. We prove that if $AG(M)$
is a tree, then either $AG(M)$ is a star graph or a path of order
4 and in the latter case $M\cong F \times S$, where $F$ is a
simple module and $S$ is a module with a unique non-trivial
submodule. Moreover, we prove that if $M$ is a cyclic module with
at least three minimal prime submodules, then $gr(AG(M))=3$ and
for every cyclic module $M$, $cl(AG(M)) \geq |Min(M)|$.
\end{abstract}
\maketitle
% ----------------------------------------------------------------
\section{Introduction}
 Throughout this paper $R$ is a commutative ring with a non-zero
identity and $M$ is a unital $R$-module. By $N\leq M$ (resp. $N<
M$) we mean that $N$ is a submodule (resp. proper submodule) of
$M$.

Define $(N:_{R}M)$ or simply $(N:M)=\{r\in R|$ $rM\subseteq N\}$
for any $N\leq M$. We denote $((0):M)$ by $Ann_{R}(M)$ or simply
$Ann(M)$. $M$ is said to be faithful if $Ann(M)=(0)$.

Let $N, K\leq M$. Then the product of $N$ and $K$, denoted by
$NK$, is defined by $(N:M)(K:M)M$ (see \cite{af07}).

There are many papers on assigning graphs to rings or modules
(see, for example, \cite{al99, ah14, b88, br11}). The
annihilating-ideal graph $AG(R)$ was introduced and studied in
\cite{br11}. $AG(R)$ is a graph whose vertices are ideals of $R$
with nonzero annihilators and in which two vertices $I$ and $J$
are adjacent if and only if $IJ=(0)$. Later, it was modified and
further studied by many authors (see, e.g., [1-3]).

In \cite{ah14, ah16}, we generalized the above idea to submodules
of $M$ and defined the (undirected) graph $AG(M)$,
called \textit {the annihilating-submodule graph}, with vertices\\
$V(AG(M))$= $\{N \leq M |$ there exists $(0)\neq K<M$ with
$NK=(0)$\}. In this graph, distinct vertices $N,L \in V(AG(M))$
are adjacent if and only if $NL=(0)$. Let $AG(M)^{*}$ be the
subgraph of $AG(M)$ with vertices $V(AG(M)^{*})=\{ N<M$ with
$(N:M)\neq Ann(M)|$ there exists a submodule $K<M$ with $(K:M)\neq
Ann(M)$ and $NK=(0)\}$. Note that $M$ is a vertex of $AG(M)$ if
and only if there exists a nonzero proper submodule $N$ of $M$
with $(N:M)=Ann(M)$ if and only if every nonzero submodule of $M$
is a vertex of $AG(M)$.

In this work, we continue our studying in \cite{ah14, ah16} and we
generalize some results related to annihilating-ideal graph
obtained in [1-3] for annihilating-submodule graph.

A prime submodule of $M$ is a submodule $P\neq M$ such that
whenever $re\in P$ for some
  $r\in R$ and $e \in M$, we have $r\in (P:M)$ or $e\in P$ \cite{lu84}.

The prime radical $rad_{M}(N)$ or simply $rad(N)$ is defined to be
the intersection of all prime submodules of $M$ containing $N$,
and in case $N$ is not contained in any prime submodule,
$rad_{M}(N)$ is defined to be $M$ \cite{lu84}.

The notations $Z(R)$, $Nil(R)$, and $Min(M)$ will denote the set
of all zero-divisors, the set of all nilpotent elements of $R$,
and the set of all minimal prime submodules of $M$, respectively.
Also, $Z_{R}(M)$ or simply $Z(M)$, the set of zero divisors on
$M$, is the set $\{r\in R|$ $rm=0$ for some $0\neq m\in M \}$.

A clique of a graph is a complete subgraph and the supremum of the
sizes of cliques in $G$, denoted by $cl(G)$, is called the clique
number of $G$. Let $\chi(G)$ denote the chromatic number of the
graph $G$, that is, the minimal number of colors needed to color
the vertices of $G$ so that no two adjacent vertices have the same
color. Obviously $\chi(G)\geq cl(G)$.

In section 2 of this paper, we prove that if $AG(M)$ is a tree,
then either $AG(M)$ is a star graph or is the path $P_{4}$ and in
this case $M\cong F\times S$, where $F$ is a simple module and $S$
is a module with a unique non-trivial submodule (see Theorem
\ref{t2.7}). Next, we study the bipartite annihilating-submodule
graphs of modules over Artinian rings (see Theorem \ref{t2.8}).
Moreover, we give some relations between the existence of cycles
in the annihilating-submodule graph of a cyclic module and the
number of its minimal prime submodules (see Theorem \ref{t2.18}
and Corollary \ref{c2.19})).

Let us introduce some graphical notions and denotations that are
used in what follows: A graph $G$ is an ordered triple $(V(G),
E(G), \psi_G )$ consisting of a nonempty set of vertices,
 $V(G)$, a set $E(G)$ of edges, and an incident function $\psi_G$ that associates an
 unordered pair
 of distinct vertices with each edge. The edge $e$ joins $x$ and $y$ if $\psi_G(e)=\{x, y\}$, and we
 say $x$ and $y$ are adjacent. A path in graph $G$ is a finite sequence of vertices $\{x_0,
x_1,\ldots ,x_n\}$, where $x_{i-1}$ and $x_i$ are adjacent for
each $1\leq i\leq n$ and we denote $x_{i-1} - x_i$ for existing an
edge between
 $x_{i-1}$ and $x_i$.

A graph $H$ is a subgraph of $G$, if $V(H)\subseteq V(G)$,
$E(H)\subseteq E(G)$, and $\psi_H$ is the restriction of $\psi_G$
to $E(H)$. A bipartite graph is a graph whose vertices can be
divided into two disjoint sets $U$ and $V$ such that every edge
connects a vertex in $U$ to one in $V$; that is, $U$ and $V$ are
each independent sets and complete bipartite graph on $n$ and $m$
vertices, denoted by $K_{n, m}$, where $V$ and $U$ are of size $n$
and $m$, respectively, and $E(G)$ connects every vertex in $V$
with all vertices in $U$. Note that a graph $K_{1, m}$ is called a
star graph and the vertex in the singleton partition is called the
center of the graph. For some $U\subseteq V (G)$, we denote by
$N(U)$, the set of all vertices of $G\setminus U$ adjacent to at
least one vertex of $U$. For every vertex $v\in V(G)$, the size of
$N(v)$ is denoted by $d(v)$. If all the vertices of $G$ have the
same degree $k$, then $G$ is called $k$-regular, or simply
regular. An independent set is a subset of the vertices of a graph
such that no vertices are adjacent. We denote by $P_{n}$ and
$C_{n}$, a path and a cycle of order $n$, respectively. Let $G$
and $G'$ be two graphs. A graph homomorphism from $G$ to $G'$ is a
mapping $\phi: V(G) \longrightarrow V(G')$ such that for every
edge $\{u, v\}$ of $G$, $\{\phi(u), \phi(v)\}$ is an edge of $G'$.
A retract of $G$ is a subgraph $H$ of $G$ such that there exists a
homomorphism $\phi: G \longrightarrow H$ such that $\phi(x)= x$,
for every vertex $x$ of $H$. The homomorphism $\phi$ is called the
retract (graph) homomorphism
 (see \cite{r05}).

\section{The Annihilating-submodule graph II}

An ideal $I\leq R$ is said to be nil if $I$ consist of nilpotent
elements; $I$ is said to be nilpotent if $I^{n}=(0)$ for some
natural number $n$.

\begin{prop}\label{p2.1} Suppose that $e$ is an idempotent element of
$R$. We have the following statements.

\begin {itemize}
\item [(a)] $R=R_{1}\oplus R_{2}$, where $R_{1}=eR$ and
$R_{2}=(1-e)R$. \item [(b)] $M=M_{1}\oplus M_{2}$, where
$M_{1}=eM$ and $M_{2}=(1-e)M$. \item [(c)] For every submodule $N$
of $M$, $N=N_{1}\times N_{2}$ such that $N_{1}$ is an
$R_{1}$-submodule $M_{1}$, $N_{2}$ is an $R_{2}$-submodule
$M_{2}$, and $(N:_{R}M)=(N_{1}:_{R_{1}}M_{1})\times
(N_{2}:_{R_{2}}M_{2})$.  \item [(d)] For submodules $N$ and $K$ of
$M$, $NK=N_{1}K_{1} \times N_{2}K_{2}$ such that $N=N_{1}\times
N_{2}$ and $K=K_{1}\times K_{2}$. \item [(e)]  Prime submodules of
$M$ are $P\times M_{2}$ and $M_{1}\times Q$, where $P$ and $Q$ are
prime submodules of $M_{1}$ and $M_{2}$, respectively.
\end{itemize}

\end{prop}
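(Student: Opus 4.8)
The plan is to treat (a)--(d) as essentially formal consequences of the two complementary idempotents $e$ and $1-e$, and then to isolate (e) as the only step carrying real content. Throughout I would use the two facts $(1-e)^2=1-e$ and $e(1-e)=0$. For (a) and (b) I write an arbitrary element as $x=ex+(1-e)x$ to obtain $R=R_1+R_2$ and $M=M_1+M_2$; directness follows by a one-line annihilation argument, e.g. if $ey=(1-e)z$ then applying $e$ gives $ey=e(1-e)z=0$, so $R_1\cap R_2=(0)$, and identically $M_1\cap M_2=(0)$.

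For (c) the governing observation is that the cross actions vanish: $R_1$ annihilates $M_2$ and $R_2$ annihilates $M_1$, so $M_1$ is an $R_1$-module and $M_2$ an $R_2$-module. For a submodule $N$ I set $N_1=eN=N\cap M_1$ and $N_2=(1-e)N=N\cap M_2$, giving $N=N_1\oplus N_2$. To identify the colon ideal I decompose $r=r_1+r_2$ with $r_i\in R_i$; because the cross actions are zero, $rM\subseteq N$ holds if and only if $r_1M_1\subseteq N_1$ and $r_2M_2\subseteq N_2$, which is precisely $(N:M)=(N_1:_{R_1}M_1)\times(N_2:_{R_2}M_2)$. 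Part (d) is then immediate by substituting (c) into the definition $NK=(N:M)(K:M)M$: products of ideals and scalar multiplications respect the decomposition $R=R_1\oplus R_2$, $M=M_1\oplus M_2$, so the product splits as $N_1K_1\oplus N_2K_2$.

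The substantive step is (e), and here the key trick is to test the prime condition against the idempotent. Let $L$ be a prime submodule of $M$ and write $L=L_1\oplus L_2$ using (c). For any $m\in M_2$ we have $em=e(1-e)m'=0\in L$, so primeness forces $e\in(L:M)$ or $m\in L$. In the first case $eM=M_1\subseteq L$, so $L_1=M_1$; in the second case every $m\in M_2$ lies in $L$, so $L_2=M_2$. Thus exactly one factor is absorbed, which yields $L=P\times M_2$ or $L=M_1\times Q$.

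For the converse I would verify that $P$ prime in $M_1$ makes $P\times M_2$ prime in $M$: given $rm\in P\times M_2$, the vanishing cross actions reduce the condition to $r_1m_1\in P$, whence either $m_1\in P$ (so $m\in P\times M_2$) or $r_1\in(P:_{R_1}M_1)$, in which case $r\in(P:_{R_1}M_1)\times R_2=(P\times M_2:M)$ since $(M_2:_{R_2}M_2)=R_2$. The point needing care is exactly this colon-ideal alternative, together with checking properness, which holds because a prime submodule of $M_1$ is by definition proper, so $P\times M_2\neq M$; the symmetric argument handles $M_1\times Q$. I expect this transfer of the prime condition across the direct-sum decomposition to be the main obstacle, as it is the only place where the module structure, and not merely bookkeeping with idempotents, is genuinely used.
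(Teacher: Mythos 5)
Your proof is correct in substance, and there is nothing in the paper to compare it against: the paper's entire proof of this proposition is the single line ``This is clear,'' so your write-up is strictly more detailed than the original. Parts (a)--(d) are handled exactly as one would expect (complementary idempotents, vanishing cross actions, $eN=N\cap M_1$, and the colon ideal splitting coordinatewise), and your observation that (e) is the only part with real content is accurate.

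One small gap in (e) is worth closing. In the forward direction you show, by testing primeness against $em=0$ for $m\in M_2$, that at least one coordinate of $L=L_1\oplus L_2$ is full (and not both, since $L$ is proper), but you then write $L=P\times M_2$ or $L=M_1\times Q$ with $P$, $Q$ \emph{prime} without verifying that the non-full coordinate is in fact a prime submodule. This needs the mirror image of the converse you did check: if $L=L_1\times M_2$ is prime in $M$ and $r_1m_1\in L_1$ with $r_1\in R_1$, $m_1\in M_1$, then $r_1(m_1,0)\in L$ forces $r_1\in (L:M)$ or $(m_1,0)\in L$, whence $r_1\in (L_1:_{R_1}M_1)$ or $m_1\in L_1$, and $L_1\neq M_1$ because $L\neq M$. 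It is a one-line argument of exactly the kind you already carried out for the converse, so adding it makes the proof complete.
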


\begin{proof}
This is clear.
\end{proof}

We need the following lemmas.

\begin{lem}\label{l2.2} (See \cite[Proposition 7.6]{af74}.)
Let $R_{1}, R_{2}, \ldots , R_{n}$ be non-zero ideals of $R$. Then
the following statements are equivalent:

\begin{itemize}
\item [(a)] $_{R}R= R_{1} \oplus \ldots \oplus R_{n}$; \item [(b)]
As an abelian group $R$ is the direct sum of $ R_{1}, \ldots ,
R_{n}$; \item [(c)] There exist pairwise orthogonal central
idempotents $e_{1},\ldots, e_{n}$ with $1=e_{1}+ \ldots +e_{n}$,
and $R_{i}=Re_{i}$, $i=1, \ldots ,n$.
\end{itemize}

\end{lem}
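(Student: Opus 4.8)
The plan is to establish the cycle of implications $(a) \Rightarrow (b) \Rightarrow (c) \Rightarrow (a)$, since a three-way equivalence is cleanest to prove as a loop. The implication $(a) \Rightarrow (b)$ is essentially a matter of unwinding definitions: an internal direct sum of the ideals $R_i$ as left $R$-submodules of $R$ means precisely that $R = R_1 + \cdots + R_n$ and $R_i \cap \sum_{j \neq i} R_j = (0)$ for each $i$, and these are exactly the conditions defining a direct sum of the $R_i$ viewed as subgroups of the additive group $(R,+)$. So nothing is needed here beyond observing that the module direct-sum and abelian-group direct-sum conditions coincide.

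The substance lies in $(b) \Rightarrow (c)$. Given the additive decomposition, I would write $1 = e_1 + \cdots + e_n$ with each $e_i \in R_i$, and uniquely so. The key computation is to take an arbitrary $r \in R_i$ and expand $r = r\cdot 1 = \sum_j r e_j$. Since each $R_i$ is a two-sided ideal, $r e_j$ lies in $R_i$ (as $r \in R_i$ and $R_i$ is a right ideal) and simultaneously in $R_j$ (as $e_j \in R_j$ and $R_j$ is a left ideal); directness then forces $r e_j = 0$ for $j \neq i$ and $r e_i = r$. Specializing to $r = e_i$ yields $e_i^2 = e_i$ and $e_i e_j = 0$ for $j \neq i$, giving pairwise orthogonal idempotents. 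For centrality I would run the symmetric computation on the other side: for arbitrary $r \in R$ with components $r_k \in R_k$, both $r e_i$ and $e_i r$ collapse to the single component $r_i$, whence $r e_i = e_i r$. Finally $R_i = R e_i$ follows from $R e_i \subseteq R_i$ (left-ideal property) together with $r = r e_i \in R e_i$ for every $r \in R_i$.

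For $(c) \Rightarrow (a)$ the reconstruction is straightforward. Centrality of the $e_i$ makes each $R e_i$ a two-sided ideal, and $r = \sum_i r e_i$ shows $R = \sum_i R_i$. To see the sum is direct as left modules, I would suppose $\sum_i x_i = 0$ with $x_i = s_i e_i \in R e_i$ and multiply on the right by a fixed $e_j$; orthogonality annihilates every term except the $j$-th and returns $x_j e_j = x_j$, forcing each $x_j = 0$.

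The step I expect to require the most care is $(b) \Rightarrow (c)$, specifically keeping the left/right ideal bookkeeping honest and deducing centrality; this is exactly where the hypothesis that the $R_i$ are two-sided ideals, rather than merely additive subgroups or one-sided ideals, is genuinely used. I note that in the setting of this paper $R$ is commutative, so centrality is automatic and that portion of the argument collapses entirely; the statement is recorded in its general form only because it is quoted verbatim from \cite{af74}.
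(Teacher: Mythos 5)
Your argument is correct; note, however, that the paper itself gives no proof of Lemma \ref{l2.2} at all --- it is quoted as a known result from \cite[Proposition 7.6]{af74}, and your cycle $(a)\Rightarrow(b)\Rightarrow(c)\Rightarrow(a)$ with the computation $r=\sum_j re_j$ forcing $re_j\in R_i\cap R_j=(0)$ for $j\neq i$ is essentially the standard textbook proof found there. Your closing observation is also apt: since $R$ is commutative throughout this paper, the centrality bookkeeping is vacuous here, and the lemma is only ever invoked in that simplified form.
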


\begin{lem}\label{l2.3} (See \cite[Theorem 21.28]{l91}.)
Let $I$ be a nil ideal in $R$ and $u\in R$ be such that
 $u+I$ is an idempotent in $R/I$. Then there exists an idempotent
 $e$ in $uR$ such that $e-u\in I$.
\end{lem}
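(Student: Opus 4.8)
The plan is to produce the lift $e$ by an explicit binomial construction. Since $u+I$ is idempotent in $R/I$, the element $a:=u^{2}-u$ lies in $I$, and because $I$ is nil there is an integer $n\geq 1$ with $a^{n}=0$. Writing $a=u(u-1)$ gives the key relation $u^{n}(u-1)^{n}=(u^{2}-u)^{n}=a^{n}=0$. The idea is to separate, inside a high power of $1=u+(1-u)$, the monomials carrying a large power of $u$ from those carrying a large power of $1-u$; the cross terms will then all contain the vanishing factor $u^{n}(1-u)^{n}$.

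Concretely, I would expand
\[
1=\bigl(u+(1-u)\bigr)^{2n-1}=\sum_{k=0}^{2n-1}\binom{2n-1}{k}u^{k}(1-u)^{2n-1-k},
\]
and set $e=\sum_{k=n}^{2n-1}\binom{2n-1}{k}u^{k}(1-u)^{2n-1-k}$ and $f=1-e$, so that $f$ collects the terms with $k\leq n-1$. Then $e+f=1$, and in the product $ef$ every summand is a multiple of $u^{k+j}(1-u)^{(2n-1-k)+(2n-1-j)}$ with $k\geq n$ and $j\leq n-1$; since then $2n-1-j\geq n$, each such summand contains the factor $u^{n}(1-u)^{n}=\pm a^{n}=0$. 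Hence $ef=0$, and therefore $e=e(e+f)=e^{2}+ef=e^{2}$, so $e$ is idempotent.

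It remains to verify the two membership conditions. Each summand of $e$ has a factor $u^{k}$ with $k\geq n\geq 1$, so, using commutativity of $R$, we may factor $u$ out and obtain $e\in uR$. For $e-u\in I$ I would pass to $R/I$, where $\bar u$ is idempotent and hence $\bar u(1-\bar u)=0$. In $\bar e$ every term with $1\leq k\leq 2n-2$ carries both a factor $\bar u$ and a factor $1-\bar u$ and so vanishes; the only surviving term is $k=2n-1$, which gives $\bar e=\bar u^{\,2n-1}=\bar u$. Thus $e-u\in I$, and $e$ is the desired idempotent.

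The main obstacle is choosing the splitting point and the exponent $2n-1$ correctly and then bookkeeping the $u$- and $(1-u)$-exponents so that every cross term in $ef$ genuinely reaches $u^{n}(1-u)^{n}$; this is exactly where the nilpotency degree $n$ enters. An alternative route avoiding the binomial bookkeeping is the Newton-type iteration $e_{0}=u$, $e_{m+1}=3e_{m}^{2}-2e_{m}^{3}$, for which the identity $e_{m+1}^{2}-e_{m+1}=-(e_{m}^{2}-e_{m})^{2}(3-2e_{m})(2e_{m}+1)$ shows that the defect $e_{m}^{2}-e_{m}$ is squared at each step, so $e_{m}$ becomes idempotent once $2^{m}\geq n$, while the properties $e_{m}\in uR$ and $e_{m}\equiv u\pmod I$ persist by induction.
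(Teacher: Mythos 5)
Your proof is correct: the expansion of $1=(u+(1-u))^{2n-1}$ with the split at $k=n$ does force every cross term of $ef$ to contain $u^{n}(1-u)^{n}=\pm(u^{2}-u)^{n}=0$, and the verifications that $e\in uR$ and $\bar e=\bar u^{\,2n-1}=\bar u$ in $R/I$ are sound (the Newton iteration you sketch also works). There is nothing in the paper to compare against: the authors state this lemma with only a citation to Lam's Theorem 21.28 and give no proof, and your binomial lifting argument is precisely the standard proof of that cited result.
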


\begin{lem}\label{l2.4} (See \cite[Lemma 2.4]{ah16}.)
Let $N$ be a minimal submodule of $M$ and let $Ann(M)$ be a nil
ideal. Then we have $N^{2}=(0)$ or $N=eM$ for some idempotent
$e\in R$.
\end{lem}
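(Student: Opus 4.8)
The plan is to argue from the minimality of $N$ together with the idempotent‑lifting power of the nilness hypothesis. First I would observe that the product $N^{2}=(N:M)^{2}M$ always satisfies $N^{2}\subseteq N$, since $(N:M)^{2}M\subseteq (N:M)M\subseteq N$. Because $N$ is a minimal (hence simple) submodule, the submodule $N^{2}$ of $N$ must be either $(0)$ or all of $N$. In the first case we are immediately in the alternative $N^{2}=(0)$ and there is nothing more to prove, so the entire work is concentrated in the case $N^{2}=N\neq (0)$, where I must produce the idempotent $e$ with $N=eM$.

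So assume $N^{2}=N$ and write $I=(N:M)$. From $N=I^{2}M\subseteq IM\subseteq N$ I first deduce the equalities $IM=N$ and then $IN=I(IM)=I^{2}M=N^{2}=N$. Now I would exploit that $N$, being simple, is cyclic: pick $0\neq m\in N$, so $N=Rm$. The relation $IN=N$ becomes $Im=Rm$, which forces $m\in Im$, i.e. $m=am$ for some $a\in I$. Consequently $(1-a)m=0$ and, since $N=Rm$, the element $a\in I=(N:M)$ acts as the identity on all of $N$, that is $(1-a)N=(0)$.

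The next step is to turn $a$ into an idempotent. For any $m'\in M$ we have $am'\in (N:M)M\subseteq N$, and since $a$ fixes $N$ pointwise this gives $a(am')=am'$, whence $(a^{2}-a)M=(0)$, i.e. $a^{2}-a\in Ann(M)$. Thus $a+Ann(M)$ is an idempotent of $R/Ann(M)$, and here is exactly where the hypothesis that $Ann(M)$ is nil enters: by Lemma \ref{l2.3} there is a genuine idempotent $e\in aR$ with $e-a\in Ann(M)$. Finally $(e-a)M=(0)$ yields $eM=aM$, while $aM\subseteq N$ (from $a\in(N:M)$) and $N=aN\subseteq aM$ (from $a$ fixing $N$) give $aM=N$; hence $N=eM$, completing the case.

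I expect the main obstacle to be the middle passage: recognizing that $a$ can be chosen to act as the identity on $N$ (for which cyclicity of the simple module $N$ is the clean tool, avoiding a full Nakayama/determinant argument) and then verifying $a^{2}-a\in Ann(M)$ so that $a$ descends to an idempotent modulo the nil ideal. Once that congruence is in hand, the nilness hypothesis does the decisive lifting through Lemma \ref{l2.3}, and the identification $eM=aM=N$ is a short verification. The subtlety to watch is keeping the two inclusions $aM\subseteq N$ and $N\subseteq aM$ separate, since they come from the two different properties of $a$ (membership in $(N:M)$ versus acting as identity on $N$).
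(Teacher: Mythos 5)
Your proof is correct and complete; since the paper only cites this lemma from \cite{ah16} without reproducing an argument, there is no in-paper proof to compare against, but your route --- reducing via minimality to the case $N^{2}=N$, extracting $a\in (N:M)$ that acts as the identity on the simple (hence cyclic) module $N$, checking $a^{2}-a\in Ann(M)$, and lifting to a true idempotent through Lemma \ref{l2.3} --- is the standard Brauer-type argument and surely the intended one. Every step checks out, including the two separate inclusions $aM\subseteq N$ (from $a\in (N:M)$) and $N=aN\subseteq aM$ that identify $eM=aM=N$.
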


\begin{prop}\label{p2.5} Let $R/Ann(M)$ be an Artinian ring and let $M$ be
 a finitely generated module.
Then every nonzero proper submodule $N$ of $M$ is a vertex in
$AG(M)$.
\end{prop}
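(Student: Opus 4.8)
The plan is to exhibit, for each nonzero proper submodule $N$, an explicit nonzero proper submodule $K$ with $NK=(0)$; the natural candidate is the annihilator $K=(0:_{M}I)=\{m\in M : Im=(0)\}$, where $I=(N:M)$. First I would record two easy facts: since $N<M$ we have $I=(N:M)\neq R$ (as $(N:M)=R$ forces $M=RM\subseteq N$), and since $(K:M)M\subseteq K$ while $IK=(0)$ by the choice of $K$, we obtain $NK=(N:M)(K:M)M=I\,(K:M)M\subseteq IK=(0)$. Thus the whole proof reduces to checking that this $K$ is a legitimate witness, i.e. that it is nonzero and proper.

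Properness is automatic away from a trivial case: $K=M$ would force $IM=(0)$, i.e. $I\subseteq Ann(M)$, which (since $Ann(M)\subseteq I$ always holds) means $I=Ann(M)$. I would therefore dispose of the case $(N:M)=Ann(M)$ separately, where in fact $NL=(N:M)(L:M)M\subseteq Ann(M)M=(0)$ for \emph{every} submodule $L$, so $N$ is a vertex (take $L=N$). In the remaining case $(N:M)\supsetneq Ann(M)$ the submodule $K$ is proper, and it only remains to prove $K\neq(0)$.

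The heart of the argument, and the step I expect to be the main obstacle, is showing $(0:_{M}I)\neq(0)$ whenever $I$ is proper, i.e. that the proper ideal $I$ annihilates some nonzero element of $M$. To see this I would pass to $\bar R=R/Ann(M)$, over which $M$ is a finitely generated faithful module and $\bar I=I/Ann(M)$ is a proper ideal. Since $\bar R$ is Artinian, Lemma \ref{l2.2} lets me write $\bar R=\bar R_{1}\times\cdots\times\bar R_{n}$ as a product of Artinian local rings with maximal ideals $\mathfrak m_{j}$, and Proposition \ref{p2.1} gives a compatible decomposition $M=M_{1}\times\cdots\times M_{n}$ with $\bar I=\prod_{j}\bar I_{j}$ and $(0:_{M}\bar I)=\prod_{j}(0:_{M_{j}}\bar I_{j})$. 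Faithfulness forces every $M_{j}\neq(0)$ (otherwise $\bar R_{j}\subseteq Ann_{\bar R}(M)=(0)$ with $\bar R_{j}\neq(0)$), and since $M$ is finitely generated each $M_{j}$ has finite length over $\bar R_{j}$. Properness of $\bar I$ yields an index $j$ with $\bar I_{j}\subseteq\mathfrak m_{j}$; then the socle $(0:_{M_{j}}\mathfrak m_{j})$ of the nonzero finite-length module $M_{j}$ over the Artinian local ring $\bar R_{j}$ is nonzero, whence $(0:_{M_{j}}\bar I_{j})\supseteq(0:_{M_{j}}\mathfrak m_{j})\neq(0)$ and so $K=(0:_{M}I)\neq(0)$.

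Putting these pieces together, $K=(0:_{M}I)$ is a nonzero proper submodule of $M$ with $NK=(0)$, so $N\in V(AG(M))$, as claimed. The only genuinely nontrivial input is the nonvanishing of the socle of a nonzero finite-length module over an Artinian local ring; the reduction modulo $Ann(M)$ and the bookkeeping with the idempotent decomposition of Proposition \ref{p2.1} and Lemma \ref{l2.2} are routine, so I would expect the write-up to be short once the candidate $K=(0:_{M}I)$ is in hand.
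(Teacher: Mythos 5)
Your proof is correct, but it takes a genuinely different route from the paper's. You exhibit the witness $(0:_{M}(N:M))$ directly: after disposing of the degenerate case $(N:M)=Ann(M)$ (where $N^{2}=(0)$ already makes $N$ a vertex), you show this annihilator is nonzero and proper by splitting the Artinian ring $R/Ann(M)$ into a product of Artinian local rings, transporting the decomposition to $M$ via Proposition \ref{p2.1}, and using the nonvanishing of the socle of a nonzero module over an Artinian local ring (which holds because the maximal ideal there is nilpotent). The paper instead enlarges $N$ to a maximal submodule $K$ (this uses finite generation), observes that $(K:M)$ is a maximal ideal of $R/Ann(M)$ and hence, by Artinianness, a minimal prime over $Ann(M)$, so that $(K:M)\in Ass(M)$ and $(K:M)=(0:m)$ for some $0\neq m\in M$; its witness is then $Rm$, which sits inside your $(0:_{M}(N:M))$. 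The paper's argument is shorter if one grants that minimal primes of the support of a finitely generated module over a Noetherian ring are associated; yours is more elementary and self-contained, avoids associated primes entirely, and is more careful about the properness and nonvanishing of the witness, which the paper glosses over. Indeed your argument never truly needs finite generation of $M$, since nilpotency of the maximal ideals of $R/Ann(M)$ does the work. One small citation point: the splitting of $R/Ann(M)$ into local factors is the structure theorem for Artinian rings (cited elsewhere in the paper as \cite[Theorem 8.9]{ati69}), with Lemma \ref{l2.2} only converting that splitting into orthogonal idempotents.
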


\begin{proof} Let $N$ be a non-zero submodule of $M$. So
there exists a maximal submodule $K$ of $M$ such that $N\subseteq
K$. Hence we have $(0:_{M}(K:M))\subseteq (0:_{M}(N:M))$. Since
$R/Ann(M)$ is an Artinian ring, $(K:M)$ is a minimal prime ideal
containing $Ann(M)$. Thus $(K:M)\in Ass(M)$.
 It follows that $(K:M)=(0:m)$ for some $0\neq m\in M$. Therefore $N(Rm)=(0)$, as desired.
\end{proof}

\begin{lem}\label{l2.6} Let $M=M_{1} \times M_{2}$, where
$M_{1}=eM$, $M_{2}=(1-e)M$, and $e$ $( e\neq 0, 1)$ is an
idempotent element of $R$. If $AG(M)$ is a triangle-free graph,
then one of the following statements holds.

\begin {itemize}
\item [(a)] Both $M_{1}$ and $M_{2}$ are prime $R$-modules.\item
[(b)] One $M_{i}$ is a prime module for $i=1, 2$ and the other one
is a module with a unique non-trivial submodule.
\end{itemize}
Moreover, $AG(M)$ has no cycle if and only if either $M=F \times
S$ or $M=F \times D$, where $F$ is a simple module, $S$ is a
module with a unique non-trivial submodule, and $D$ is a prime
module.
\end{lem}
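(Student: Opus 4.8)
The plan is to push everything through the coordinatewise description of submodules and products from Proposition \ref{p2.1}. By part (c) every submodule of $M$ has the form $N_{1}\times N_{2}$ with $N_{i}\le M_{i}$, and by part (d) $(N_{1}\times N_{2})(K_{1}\times K_{2})=N_{1}K_{1}\times N_{2}K_{2}$; hence two vertices are adjacent exactly when both coordinate products vanish. Three kinds of submodule will be used constantly: $M_{1}\times(0)$, $(0)\times M_{2}$, and the families $N_{1}\times(0)$, $(0)\times N_{2}$. One checks immediately that $M_{1}\times(0)$ and $(0)\times M_{2}$ are adjacent, that every $N_{1}\times(0)$ with $(0)\neq N_{1}\le M_{1}$ is a vertex adjacent to $(0)\times M_{2}$, and symmetrically for $(0)\times N_{2}$. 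Throughout I will use the characterization, valid in the present setting, that a factor $M_{i}$ is a prime module if and only if the product of any two nonzero submodules of $M_{i}$ is nonzero; failure of primeness thus supplies nonzero (necessarily proper) submodules $A_{i},B_{i}\le M_{i}$ with $A_{i}B_{i}=(0)$.

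For the first assertion I argue by contradiction in two steps. First, if neither $M_{1}$ nor $M_{2}$ were prime, choose nonzero proper $A_{i},B_{i}\le M_{i}$ with $A_{i}B_{i}=(0)$; then the three distinct vertices $A_{1}\times(0)$, $(0)\times A_{2}$ and $B_{1}\times B_{2}$ are pairwise adjacent (each product vanishes in both coordinates), a triangle --- contradiction. Hence at least one factor, say $M_{2}$, is prime. Second, suppose $M_{1}$ is not prime. If it had two \emph{distinct} nonzero proper submodules $A_{1}\neq B_{1}$ with $A_{1}B_{1}=(0)$, then $(0)\times M_{2}$ together with $A_{1}\times(0)$ and $B_{1}\times(0)$ would form a triangle; so triangle-freeness forces every annihilating pair inside $M_{1}$ to be a single square-zero submodule. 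I then show, using this together with the correspondence between submodules of $M_{1}$ and the relevant ideals (so that the problem becomes the annihilating-ideal situation treated in [1-3]), that $M_{1}$ can have only one nontrivial submodule. Combining the two steps yields alternative (a) when both factors are prime and alternative (b) otherwise.

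For the ``moreover'' part, note first that having no cycle is stronger than being triangle-free, so by the first part (a) or (b) holds. The key extra gadget is a $4$-cycle: whenever $M_{1}$ has a nontrivial submodule $N_{1}$ and $M_{2}$ has a nontrivial submodule $N_{2}$, the vertices $M_{1}\times(0)$, $(0)\times M_{2}$, $N_{1}\times(0)$, $(0)\times N_{2}$ give the cycle $M_{1}\times(0)-(0)\times M_{2}-N_{1}\times(0)-(0)\times N_{2}-M_{1}\times(0)$. Thus acyclicity forbids both factors from carrying a nontrivial submodule at once. In case (a) this forces one factor to be simple, so $M\cong F\times D$ with $F$ simple and $D=M_{2}$ prime; in case (b) it forces the prime factor to be simple, so $M\cong F\times S$ with $F$ simple and $S$ the factor possessing a unique nontrivial submodule. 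For the converse I verify directly that these two modules are acyclic: for $F\times D$ the only vertices turn out to be $F\times(0)$ and the modules $(0)\times N_{2}$ (no two of which are adjacent, since $D$ is prime), giving a star; for $F\times S$ a short enumeration gives exactly the path $P_{4}$.

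I expect the genuine obstacle to be the uniqueness assertion in the second step of the first part: ruling out a second nontrivial submodule in a non-prime factor is not a one-line triangle but requires the ideal-theoretic analysis of square-zero submodules (for instance, that products such as $N\cdot(NL)$ again vanish, producing a forbidden triangle), i.e. the transfer of the annihilating-ideal classification to this setting. The routine but necessary bookkeeping --- checking that every submodule produced is nonzero, proper, and distinct from the others, and that the working equivalence between primeness and non-vanishing of products is applied only where it is actually valid --- is the other place where care is needed.
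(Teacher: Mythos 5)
Your overall strategy coincides with the paper's: the same triangle rules out both factors being non-prime, and the same $4$-cycle $M_{1}\times(0) - (0)\times M_{2} - N_{1}\times(0) - (0)\times N_{2}$ handles the ``moreover'' part. The genuine gap is exactly where you predicted it: passing from ``every annihilating pair inside the non-prime factor is a single square-zero submodule'' to ``that factor has a unique non-trivial submodule.'' Your sketch does not close this. From your hypothesis one can extract that there is a square-zero submodule $N$ which every other non-trivial submodule contains (since $L\neq N$ forces $NL\neq(0)$, and $N(NL)=(0)$ then forces $NL=N\subseteq L$), but nothing yet excludes a chain $N\subsetneq L\subsetneq M_{i}$, so uniqueness is not a formal consequence of the triangle-freeness of annihilating pairs alone. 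The paper closes this differently: it works with the graph $AG(M_{2})$ of the non-prime factor itself, notes that an edge $\{N,K\}$ of $AG(M_{2})$ lifts to the triangle $M_{1}\times(0)$, $(0)\times N$, $(0)\times K$, so $AG(M_{2})$ has at most one vertex, and then invokes \cite[Theorem 3.6]{ah14} together with Proposition \ref{p2.5} (under which \emph{every} nonzero proper submodule of $M_{2}$ is a vertex) to convert ``at most one vertex'' into ``at most one non-trivial submodule.'' If you decline to cite those results you must reprove that finiteness transfer; it is not bookkeeping, and your appeal to ``the annihilating-ideal situation treated in [1-3]'' is not a substitute for it.

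A second, smaller problem is your two-sided ``characterization'' that $M_{i}$ is prime iff the product of any two nonzero submodules of $M_{i}$ is nonzero. Only the direction you actually use in the first part is valid: non-primeness gives $rm=0$ with $r\notin Ann(M_{i})$, $m\neq 0$, and then $(rM_{i})(Rm)\subseteq r(Rm:M_{i})M_{i}\subseteq Rrm=(0)$. The converse fails, because a prime module $D$ may possess a nonzero submodule $Y$ with $(Y:D)=Ann(D)$, and such a $Y$ has zero product with every submodule. This undermines your direct verification that $AG(F\times D)$ is a star (the vertices $(0)\times Y$ for such $Y$ are pairwise adjacent and create cycles). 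The paper dismisses this direction with ``the converse is trivial,'' so the defect is inherited from the statement itself, but you should not present the primeness of $D$ as the reason no two vertices $(0)\times N_{2}$, $(0)\times N_{2}'$ are adjacent.
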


\begin{proof}
If none of $M_{1}$ and $M_{2}$ is a prime module, then there exist
$r\in R_{i}$ ($R_{1}=Re$ and $R_{2}=R(1-e)$), $0\neq m_{i}\in
M_{i}$ with $r_{i}m_{i}=0$, and $r_{i}\notin Ann_{R_{i}}(M_{i})$
for $i=1, 2$. So $r_{1}M_{1} \times (0)$, $(0) \times r_{2}M_{2}$,
and $R_{1}m_{1} \times R_{2}m_{2}$ form a triangle in $AG(M)$, a
contradiction. Thus without loss of generality, one can assume
that $M_{1}$ is a prime module. We prove that $AG(M_{2})$ has at
most one vertex. On the contrary suppose that $\{N, K\}$ is an
edge of $AG(M_{2})$. Therefore, $M_{1} \times (0)$, $(0) \times
N$, and $(0) \times K$ form a triangle, a contradiction. If
$AG(M_{2})$ has no vertex, then $M_{2}$ is a prime module and so
part (a) occurs. If $AG(M_{2})$ has exactly one vertex, then by
\cite[Theorem 3.6]{ah14} and Proposition \ref{p2.5}, we obtain
part (b). Now, suppose that $AG(M)$ has no cycle. If none of
$M_{1}$ and $M_{2}$ is a simple module, then choose non-trivial
submodules $N_{i}$ in $M_{i}$ for some $i = 1, 2$. So $N_{1}
\times (0)$, $(0) \times N_{2}$, $M_{1} \times (0)$, and $(0)
\times M_{2}$ form a cycle, a contradiction. The converse is
trivial.
\end{proof}

\begin{thm}\label{t2.7}
If $AG(M)$ is a tree, then either $AG(M)$ is a star graph or
$AG(M)\cong P_{4}$. Moreover, $AG(M)\cong P_{4}$ if and only if
$M=F \times S$, where $F$ is a simple module and $S$ is a module
with a unique non-trivial submodule.
\end{thm}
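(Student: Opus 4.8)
The plan is to split on whether $AG(M)$ is a star. If it is, the first alternative holds and there is nothing to prove, so assume $AG(M)$ is a tree that is not a star. A tree of diameter at most $2$ is a star, hence $AG(M)$ has diameter at least $3$ and so contains a shortest path on four vertices; since a path in a tree has no chords, this is an induced path $N_{1}-N_{2}-N_{3}-N_{4}$. Recording edges and non-edges as products, I would write $N_{1}N_{2}=N_{2}N_{3}=N_{3}N_{4}=(0)$ while $N_{1}N_{3}$, $N_{2}N_{4}$ and $N_{1}N_{4}$ are all nonzero. Passing to $\overline{R}=R/Ann(M)$, on which $M$ is faithful, I set $\mathfrak{a}_{i}=(N_{i}:M)$ and translate every relation $N_{i}N_{j}=(0)$ into $\mathfrak{a}_{i}\mathfrak{a}_{j}=(0)$ in $\overline{R}$; note that $Ann_{\overline{R}}(M)=(0)$ is (trivially) nil, so Lemmas \ref{l2.3} and \ref{l2.4} are available over $\overline{R}$, where the submodules and the products $NK=(N:M)(K:M)M$ are unchanged.

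The key structural step is to show, using acyclicity alone, that the middle vertices $N_{2}$ and $N_{3}$ are minimal submodules of $M$. First I would record two elementary facts: if $W$ is a nonzero submodule of a vertex $V$, then $(W:M)\subseteq(V:M)$ forces $W$ to be a vertex, and every neighbour of $V$ is a neighbour of $W$. Now suppose $N_{2}$ is not minimal and choose $(0)\neq W\subsetneq N_{2}$. Then $W$ inherits the two neighbours $N_{1},N_{3}$ of $N_{2}$; moreover $W\neq N_{1}$ (else $N_{1}\subsetneq N_{2}$ gives $N_{1}N_{3}\subseteq N_{2}N_{3}=(0)$) and $W\neq N_{3}$ (else $N_{3}\subsetneq N_{2}$ gives $N_{1}N_{3}\subseteq N_{1}N_{2}=(0)$), each contradicting $N_{1}N_{3}\neq(0)$. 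Hence $N_{1}-N_{2}-N_{3}-W-N_{1}$ is a $4$-cycle, impossible in a tree; so $N_{2}$ is minimal, and the symmetric argument (using $N_{2}N_{4}\neq(0)$) shows $N_{3}$ is minimal. The same circle of ideas yields $N_{2}\cap N_{3}=(0)$, which I would keep as a sanity check.

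With $N_{2},N_{3}$ minimal and $Ann_{\overline{R}}(M)$ nil, Lemma \ref{l2.4} gives, for each $i\in\{2,3\}$, either $N_{i}^{2}=(0)$ or $N_{i}=e_{i}M$ for an idempotent $e_{i}\in\overline{R}$. If some $N_{i}=e_{i}M$, then $e_{i}\neq 0,1$ (a minimal submodule is nonzero, and $N_{i}=M$ would make $M$ simple, excluded since $AG(M)$ has vertices), so by Proposition \ref{p2.1} applied over $\overline{R}$ we obtain a nontrivial decomposition $M=M_{1}\times M_{2}$. Since a tree has no cycle, Lemma \ref{l2.6} then applies and yields $M\cong F\times S$ or $M\cong F\times D$, with $F$ simple, $S$ a module with a unique nontrivial submodule and $D$ a prime module. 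A direct inspection shows $AG(F\times D)$ is a star; as we are in the non-star case this possibility is excluded, leaving $M\cong F\times S$, for which $AG(M)\cong P_{4}$.

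I expect the main obstacle to be the remaining case in which $N_{2}^{2}=(0)$ and $N_{3}^{2}=(0)$ simultaneously, i.e. producing a nontrivial idempotent when neither middle vertex is visibly of the form $eM$. Here I would argue that two square-zero minimal submodules with $N_{2}N_{3}=(0)$, together with $N_{1}N_{3}\neq(0)$ and $N_{2}N_{4}\neq(0)$, force a forbidden cycle in $AG(M)$ (equivalently, that under these hypotheses $\overline{R}/Nil(\overline{R})$ still carries a nontrivial idempotent, lifted to $\overline{R}$ through $Nil(\overline{R})$ by Lemma \ref{l2.3}), thereby reducing to the decomposable case above. Finally, for the ``moreover'' and the excluded subcase I would simply compute: writing $S$ with unique nontrivial submodule $T$ (so $0<T<S$, and in this situation $T^{2}=(0)$ while $TS=T$), the proper nonzero submodules of $F\times S$ are exactly $F\times(0)$, $(0)\times T$, $(0)\times S$, $F\times T$, and Proposition \ref{p2.1}(d) shows the only vanishing products give the path $(0)\times S\,-\,F\times(0)\,-\,(0)\times T\,-\,F\times T$, i.e. $AG(M)\cong P_{4}$; the analogous bookkeeping for $F\times D$ (with $AG(D)$ empty, as in Lemma \ref{l2.6}) produces a star. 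Since $P_{4}$ is itself a tree, this computation also establishes the converse, completing the equivalence.
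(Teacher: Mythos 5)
Your strategy tracks the paper's quite closely: extract an induced path $N_{1}-N_{2}-N_{3}-N_{4}$ from a non-star tree, show the two middle vertices are minimal submodules, use Lemma \ref{l2.4} (after passing to $R/Ann(M)$) to produce a nontrivial idempotent, decompose $M$ and finish with Lemma \ref{l2.6}; the converse computation for $F\times S$ is also the paper's. Your minimality argument --- that a nonzero $W\subsetneq N_{2}$ inherits the neighbours $N_{1},N_{3}$ and closes the $4$-cycle $N_{1}-N_{2}-N_{3}-W-N_{1}$ --- is correct and arguably cleaner than the paper's route through its Claims 2--4. (You also silently skip the preliminary case in which $M$ itself is a vertex, which the paper dispatches first by showing $AG(M)$ is then automatically a star; this is minor but should be said.)

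The genuine gap is exactly the point you label ``the main obstacle'': the case $N_{2}^{2}=N_{3}^{2}=(0)$ is never ruled out. You announce that these hypotheses ``force a forbidden cycle'' or an idempotent in $\overline{R}/Nil(\overline{R})$, but you exhibit neither, and nothing earlier in your argument produces one --- Lemma \ref{l2.4} gives no idempotent when the square vanishes, and Lemma \ref{l2.3} needs an idempotent modulo a nil ideal to lift, which you have not constructed. This is the one step that requires a new idea, and it is precisely what the paper's Claim 2 supplies: if both middle vertices had square zero, the submodule $L=(N_{2}:M)M+(N_{3}:M)M$ would be a vertex adjacent to both $N_{2}$ and $N_{3}$ and distinct from each (e.g.\ $(N_{3}:M)M\subseteq N_{2}$ would give $N_{1}N_{3}\subseteq N_{1}N_{2}=(0)$, contradicting $N_{1}N_{3}\neq(0)$, and symmetrically; $L=(0)$ is excluded the same way), so $N_{2}-N_{3}-L-N_{2}$ would be a triangle in a tree. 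That contradiction is what guarantees one of $N_{2},N_{3}$ has nonzero square, hence equals $eM$ by Lemma \ref{l2.4}, launching the decomposition. Without this witness vertex your reduction to the decomposable case is incomplete, so the proof as written does not close.
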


\begin{proof}
If $M$ is a vertex of $AG(M)$, then there exists only one vertex
$N$ such that $Ann(M)=(N:M)$ and since $AG(M)^{*}$ is an empty
subgraph, hence $AG(M)$ is a star graph. Therefore we may assume
that $M$ is not a vertex of $AG(M)$. Suppose that $AG(M)$ is not a
star graph. Then $AG(M)$ has at least four vertices. Obviously,
there are two adjacent vertices $N$ and $K$ of $AG(M)$ such that
$|V(N)\setminus \{K\}|\geq 1$ and $|V(K)\setminus \{N\}|\geq 1$.
Let $V(N)\setminus \{K\} = \{N_{i}\}_{i\in \Lambda}$ and
$V(K)\setminus \{N\} = \{K_{j}\}_{j\in \Gamma}$. Since $AG(M)$ is
a tree, we have $V(N)\cap V(K)=\emptyset$. By \cite[Theorem
3.4]{ah14}, $diam(AG(M))\leq 3$. So every edge of $AG(M)$ is of
the form $\{N, K\}$, $\{N, N_{i}\}$ or $\{K, K_{j}\}$, for some
$i\in \Lambda$ and $j\in \Gamma$. Now,
consider the following claims:\\\\
Claim 1. Either $N^{2}=(0)$ or $K^{2}=(0)$. Pick $p\in \Lambda$
and $q\in \Gamma$. Since $AG(M)$ is a tree, $N_{p}K_{q}$ is a
vertex of $AG(M)$. If $N_{p}K_{q}=N_{u}$, for some $u\in \Lambda$,
then $KN_{u}=(0)$, a contradiction. If $N_{p}K_{q}=K_{v}$, for
some $v\in \Gamma$, then $NK_{v}=(0)$, a contradiction. If
$N_{p}K_{q}=N$ or $N_{p}K_{q}=K$, then $N^{2} =(0)$ or $K^{2}
=(0)$,
respectively and the claim is proved.\\\\
Here, without loss of generality, we suppose that $N^{2} =
(0)$. Clearly, $(N:M)M\nsubseteq K$ and $(K:M)M\nsubseteq N$.\\\\
Claim 2. Our claim is to show that $N$ is a minimal submodule of
$M$ and $K^{2}\neq (0)$. To see that, first we show that for every
$0\neq m\in N$, $Rm=N$. Assume that $0\neq m\in N$ and $Rm\neq N$.
If $Rm = K$, then $K\subseteq N$, a contradiction. Thus $Rm\neq
K$, and the induced subgraph of $AG(M)$ on $N$, $K$, and $Rm$ is
$K_{3}$, a contradiction. So $Rm = N$. This implies that $N$ is a
minimal submodule of $M$. Now if $K^{2}=(0)$, then we obtain the
induced subgraph on $N$, $K$, and
$(N:M)M+(K:M)M$ is $K_{3}$, a contradiction. Thus $K^{2}\neq (0)$, as desired.\\\\
Claim 3. For every $i\in \Lambda$ and every $j\in \Gamma$,
$N_{i}\cap K_{j}=N$. Let $i\in \Lambda$ and $j\in \Gamma$. Since
$N_{i}\cap K_{j}$ is a vertex and $N(N_{i}\cap K_{j})=K(N_{i}\cap
K_{j})=(0)$, either $N_{i}\cap K_{j}=N$ or $N_{i}\cap K_{j}=K$. If
$N_{i}\cap K_{j}=K$, then $K^{2}=(0)$, a contradiction. Hence
$N_{i}\cap K_{j}=N$ and the claim is
proved.\\\\
Claim 4. We complete the claim by showing that $M$ has exactly two
minimal submodules $N$ and $K$. Let $L$ be a non-zero submodule
properly contained in $K$. Since $NL\subseteq NK=(0)$, either
$L=N$ or $L=N_{i}$ for some $i\in \Lambda$. So by the Claim 3,
$N\subseteq L\subseteq K$, a contradiction. Hence $K$ is a minimal
submodule of $M$. Suppose that $L'$ is another minimal submodule
of $M$. Since $N$ and $K$ both are minimal submodules, we deduce
that $NL'=KL'=(0)$, a
contradiction. So the claim is proved.\\\\
Now by the Claims 2 and 4, $K^{2}\neq (0)$ and $K$ is a minimal
submodule of $M$. Then by Lemma \ref{l2.4}, $K=eM$ for some
idempotent $e\in R$. Now we have $M\cong eM\times (1-e)M$. By
Lemma \ref{l2.6}, we deduce that either $M=F \times S$ and
$AG(M)\cong P_{4}$ or $R=F \times D$ and $AG(M)$ is a star graph.
Conversely, we assume that $M=F \times S$. Then $AG(M)$ has
exactly four vertices $(0)\times S$, $F \times (0)$, $(0)\times
N$, and $F\times N$. Thus $AG(M)\cong P_{4}$ with the vertices
$(0)\times S$, $F \times (0)$, $(0)\times N$, and $F\times N$.
\end{proof}

\begin{thm}\label{t2.8}
Let $R$ be an Artinian ring and $AG(M)$ is a bipartite graph. Then
either $AG(M)$ is a star graph or $AG(M)\cong P_{4}$. Moreover,
$AG(M)\cong P_{4}$ if and only if $M=F \times S$, where $F$ is a
simple module and $S$ is a module with a unique non-trivial
submodule.
\end{thm}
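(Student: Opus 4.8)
The plan is to force $AG(M)$ to be triangle-free from bipartiteness and then to use the Artinian structure of $R$ to reduce everything to the decompositions already treated by Lemma \ref{l2.6} and Theorem \ref{t2.7}. First I would record that a bipartite graph contains no odd cycle, so in particular $AG(M)$ is $K_{3}$-free. Next I would reduce to the faithful case: replacing $R$ by $\bar R=R/Ann(M)$ leaves the products $NK=(N:M)(K:M)M$, the submodule lattice, and hence the graph $AG(M)$ unchanged, while $\bar R$ is again Artinian and $M$ becomes faithful, so $Ann(M)=(0)$ is a nil ideal and Lemmas \ref{l2.4} and \ref{l2.6} are available. Writing $\bar R\cong \bar R_{1}\times\cdots\times \bar R_{n}$ as a product of local Artinian rings (Lemma \ref{l2.2}) yields $M\cong M_{1}\times\cdots\times M_{n}$ with each $M_{i}$ a nonzero faithful $\bar R_{i}$-module.

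The first key step is to bound $n$. If $n\ge 3$, then the three distinct submodules $M_{1}\times(0)\times\cdots$, $(0)\times M_{2}\times(0)\times\cdots$ and $(0)\times(0)\times M_{3}\times\cdots$ are vertices that pairwise multiply to $(0)$ by Proposition \ref{p2.1}(d), forming a triangle and contradicting triangle-freeness. Hence $n\le 2$. If $n=2$, then $e=e_{1}$ is a nontrivial idempotent and $M=eM\times(1-e)M$, so Lemma \ref{l2.6} applies and leaves its two cases. In each, a factor that is a prime module is faithful over a local Artinian ring, hence over a field, so it is a vector space $V$; if $\dim V\ge 2$ then $V$ has at least three distinct lines $\ell,\ell',\ell''$, and $\ell\times(0),\ell'\times(0),\ell''\times(0)$ pairwise multiply to $(0)$, again a triangle. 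Thus every prime factor is simple: in case (a) both factors are simple and $AG(M)\cong K_{2}$ is a star, while in case (b) one factor is simple and the other has a unique non-trivial submodule, i.e. $M\cong F\times S$, and the direct computation (as in the converse of Theorem \ref{t2.7}) gives $AG(M)\cong P_{4}$.

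The remaining and hardest case is $n=1$, where $\bar R$ is local with nilpotent maximal ideal $\mathfrak m$ and has no nontrivial idempotent, so Lemma \ref{l2.6} is unavailable and I would argue directly (assuming $M$ is not simple, otherwise $AG(M)$ has no vertices). Every simple submodule $T$ is annihilated by $\mathfrak m$ (Nakayama, since $T$ is cyclic and $\mathfrak m$ is nilpotent), whence $(T:M)\subseteq\mathfrak m$ and $\mathfrak m(T:M)=(0)$; consequently any two distinct simple submodules multiply to $(0)$, so three of them would form a triangle. Triangle-freeness therefore forces $M$ to have a unique minimal submodule $S$. Since $(N:M)\subseteq\mathfrak m$ for every proper $N$ (as $(N:M)=\bar R$ only when $N=M$), the identity $\mathfrak m(S:M)=(0)$ gives $SN=(S:M)(N:M)M=(0)$ for every proper submodule $N$, so $S$ is adjacent to every other vertex. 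Were two vertices $N,K\neq S$ adjacent, $\{S,N,K\}$ would be a triangle, so no such edge exists and $AG(M)$ is a star centered at $S$.

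Collecting the cases, $AG(M)$ is a star or is $P_{4}$, with $P_{4}$ occurring exactly in case (b), i.e. if and only if $M\cong F\times S$. I expect the main obstacle to be precisely this indecomposable case $n=1$: with no nontrivial idempotent one cannot invoke Lemma \ref{l2.6}, and the crux is the annihilator computation $\mathfrak m(S:M)=(0)$, which turns the unique minimal submodule $S$ into a universal neighbour and collapses $AG(M)$ to a star. The decomposable cases are comparatively routine once triangle-freeness is combined with the product description of Proposition \ref{p2.1} and the ``lines give triangles'' observation that kills prime factors of dimension $\ge 2$.
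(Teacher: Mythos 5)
Your proposal is correct and follows essentially the same route as the paper: decompose the Artinian ring into local factors, use bipartiteness/triangle-freeness to force at most two factors, invoke Lemma \ref{l2.6} (with Theorem \ref{t2.7}) in the decomposable case, and exhibit a universal neighbour in the local case. The only real difference is that in the local case the paper takes $\mathfrak{m}^{k-1}M$ as the star's centre where you take a simple submodule $S$ via $\mathfrak{m}(S:M)=(0)$ --- the same idea --- and both arguments quietly pass over the possibility that $M$ itself is a vertex, which is harmless here.
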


\begin{proof}
First suppose that $R$ is not a local ring. Hence by \cite[Theorem
8.9]{ati69}, $R = R_{1}\times \ldots \times R_{n}$, where $R_{i}$
is an Artinian local ring for $i=1, \ldots , n$. By Lemma
\ref{l2.2} and Proposition \ref{p2.1}, since $AG(M)$ is a
bipartite graph, we have $n=2$ and hence $M\cong M_{1}\times
M_{2}$. If $M_{1}$ is a prime module, then it is easy to see that
$M_{1}$ is a vector space over $R/Ann(M_{1})$ and so is a
semisimple $R$-module. Hence by Lemma \ref{l2.6} and Theorem
\ref{t2.7}, we deduce that either $AG(M)$ is isomorphic to $P_{2}$
or $P_{4}$. Now we assume that $R$ is an Artinian local ring. Let
$m$ be the unique maximal ideal of $R$ and $k$ be a natural number
such that $m^{k}M=0$ and $m^{k-1}M\neq 0$. Clearly, $m^{k-1}M$ is
adjacent to every other vertex of $AG(M)$ and so $AG(M)$ is a star
graph.
\end{proof}

\begin{prop}\label{p2.9} Assume that $Ann(M)$ is a nil
ideal of $R$.

\begin{itemize}
\item [(a)] If $AG(M)$ is a finite bipartite graph, then either
$AG(M)$ is a star graph or $AG(M)\cong P_{4}$. \item [(b)] If
$AG(M)$ is a regular graph of finite degree, then $AG(M)$ is a
complete graph.
\end{itemize}

\end{prop}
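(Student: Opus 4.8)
The plan is to reduce part (a) to Theorem \ref{t2.7} by showing that $AG(M)$ is acyclic, hence a tree. Since $AG(M)$ is bipartite it is triangle-free, and since it is connected with $diam(AG(M))\le 3$ by \cite[Theorem 3.4]{ah14}, a finite connected acyclic $AG(M)$ is forced by Theorem \ref{t2.7} to be a star or $P_{4}$, with the latter occurring exactly when $M\cong F\times S$. The engine of the argument is Lemma \ref{l2.4}. First I would use finiteness to choose a vertex $N$ that is minimal under inclusion; since every nonzero submodule $L$ of a vertex $N$ is again a vertex (if $NK=(0)$ then $LK\subseteq NK=(0)$), such an $N$ is a minimal submodule of $M$ (note $N\neq M$, as $M$ is not a minimal vertex). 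Lemma \ref{l2.4} then splits the analysis into the two cases $N=eM$ and $N^{2}=(0)$.

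In the case $N=eM$ we have $e\neq 0,1$, so $M\cong M_{1}\times M_{2}$ with $M_{1}=eM=N$ simple, and I would feed this into Lemma \ref{l2.6}. Triangle-freeness places us in case (a) (both $M_{i}$ prime) or case (b) (one factor prime, the other with a unique nontrivial submodule) of that lemma. A direct inspection of $AG(M_{1}\times M_{2})$ finishes this case: because $M_{1}$ is simple, the vertex $M_{1}\times(0)$ is adjacent to $(0)\times K_{2}$ for every submodule $K_{2}$, so a nonzero submodule of the prime factor annihilated by its colon ideal would create a triangle through $M_{1}\times(0)$; ruling such submodules out shows that case (a) leaves only the two vertices $M_{1}\times(0)$ and $(0)\times M_{2}$, i.e. $AG(M)\cong P_{2}$ is a star, while case (b) reproduces the configuration $M\cong F\times S$ with $AG(M)\cong P_{4}$ exactly as in the converse direction of Theorem \ref{t2.7}.

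The remaining case, $N^{2}=(0)$ for every minimal submodule (equivalently, $M$ admits no nontrivial idempotent splitting), is where the real work lies. Here I would show that $AG(M)$ cannot contain a cycle. Since $AG(M)$ is bipartite, the shortest cycle is an induced even cycle of length $\ge 4$; taking a chordless cycle $N_{1}-N_{2}-N_{3}-N_{4}-N_{1}$ I would run the intersection and product manipulations of Theorem \ref{t2.7} (examining $N_{1}\cap N_{3}$, $N_{2}\cap N_{4}$, and the submodules $(N_{i}:M)M$) to manufacture a chord, contradicting triangle-freeness. Once $AG(M)$ is known to be a tree, Theorem \ref{t2.7} leaves only ``star'' or ``$P_{4}$''; but $P_{4}$ forces $M\cong F\times S$, whose simple factor $F$ is a minimal submodule of the form $eM$, contradicting the standing assumption $N^{2}=(0)$. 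Hence $AG(M)$ is a star. Promoting triangle-freeness to acyclicity using only the hypothesis that $Ann(M)$ is nil (in place of the Artinian hypothesis of Theorem \ref{t2.8}) is the main obstacle.

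For part (b), I would first observe that a graph that is regular of finite degree $k$ and connected with $diam(AG(M))\le 3$ is automatically finite, since a breadth-first search from any vertex reaches at most $1+k+k^{2}+k^{3}$ vertices. The strategy is then to exhibit a vertex adjacent to all the others: in a finite regular graph the existence of one vertex of degree $|V(AG(M))|-1$ forces every vertex to have that degree, so $AG(M)$ is complete. To produce this dominating vertex I would again pass to a minimal submodule via Lemma \ref{l2.4}; in the idempotent case $M\cong M_{1}\times M_{2}$ with $M_{1}$ simple, comparing the neighbourhoods of $M_{1}\times(0)$ and $(0)\times M_{2}$ shows that regularity collapses all intermediate submodules and yields a complete graph. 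The main obstacle is again the case with no nontrivial idempotent, where every minimal submodule squares to zero; there I would locate the dominating vertex inside the socle of $M$ and combine $N^{2}=(0)$ with regularity to show that every two vertices have product $(0)$, i.e. $AG(M)$ is complete.
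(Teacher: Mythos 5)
Your reduction strategy leaves the decisive case unproved, and you say so yourself: in both parts you flag the situation where $M$ admits no nontrivial idempotent splitting (every minimal submodule $N$ has $N^{2}=(0)$) as ``the main obstacle'' and offer only a sketch --- manufacturing a chord in a chordless $4$-cycle in (a), locating a dominating vertex in the socle in (b) --- without carrying it out. That case is precisely where the content of the proposition lies, and the intersection/product manipulations of Theorem \ref{t2.7} do not obviously transfer: for a chordless cycle $N_{1}-N_{2}-N_{3}-N_{4}-N_{1}$ the submodule $N_{1}\cap N_{3}$ may well be a \emph{new} vertex adjacent to $N_{2}$ and $N_{4}$, which yields another $4$-cycle rather than a triangle, so no contradiction results. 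The paper takes an entirely different route here. From the finiteness of $AG(M)$ (in (a)), or from the finite length of $rM$ and $(0:_{M}r)$ forced by finite regular degree (in (b)), it concludes via \cite[Theorem 3.6]{ah14} that $R/Ann(M)$ is Artinian. If $R/Ann(M)$ is local with maximal ideal $m/Ann(M)$, one picks $k$ with $m^{k}M=0\neq m^{k-1}M$; then $m^{k-1}M$ is adjacent to every other vertex, which settles the ``no idempotent'' case at once (star graph in (a), complete graph in (b) by regularity). If $R/Ann(M)$ is not local, Lemma \ref{l2.3} lifts an idempotent modulo the nil ideal $Ann(M)$ to give $M\cong eM\times(1-e)M$, and Lemma \ref{l2.6} finishes. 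So the nil hypothesis enters through idempotent lifting from the Artinian quotient, not merely through Lemma \ref{l2.4}, and the Artinian-local case is resolved by powers of the maximal ideal rather than by any cycle analysis.

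Two smaller points. First, your claim that case (a) of Lemma \ref{l2.6} leaves ``only the two vertices $M_{1}\times(0)$ and $(0)\times M_{2}$'' is false when the prime factor $M_{2}$ has nontrivial submodules: every $(0)\times K$ with $0\neq K<M_{2}$ is a vertex adjacent to $M_{1}\times(0)$; the graph is still a star, so the conclusion survives, but the inspection must be redone. Second, your observation in (b) that a connected graph of finite regular degree and diameter at most $3$ is finite is correct and is arguably cleaner than the paper's finite-length argument for obtaining finiteness; but finiteness alone does not finish the proof --- you still need the Artinian reduction (or some substitute) to produce the dominating vertex in the non-split case.
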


\begin{proof}
$(a)$. If $M$ is a vertex of $AG(M)$, then $AG(M)$ has only one
vertex $N$ such that $Ann(M)=(N:M)$ and since $AG(M)^{*}$ is an
empty subgraph, $AG(M)$ is a star graph. Thus we may assume that
$M$ is not a vertex of $AG(M)$ and hence by \cite[Theorem
3.3]{ah14}, $M$ is not a prime module. Therefore \cite[Theorem
3.6]{ah14} follows that $R/Ann(M)$ is an Artinian ring. If
$(R/Ann(M), m/Ann(M))$ is a local ring, then there exists a
natural number $k$ such that $m^{k}M=0$ and $m^{k-1}M\neq 0$.
Clearly, $m^{k-1}M$ is adjacent to every other vertex of $AG(M)$
and so $AG(M)$ is a star graph. Otherwise, by \cite[Theorem
8.9]{ati69} and Lemma \ref{l2.2}, there exist pairwise orthogonal
central idempotents modulo $Ann(M)$. By Lemma \ref{l2.3}, it is
easy to see that $M\cong eM\times (1-e)M$, where $e$ is an
idempotent element of $R$ and Lemma \ref{l2.6} implies that
$AG(M)$ is a star graph or
$AG(M)\cong P_{4}$.\\
$(b)$. If $M$ is a vertex of $AG(M)$, since $AG(M)$ is a regular
graph, then $AG(M)$ is a complete graph. Hence we may assume that
$M$ is not a vertex of $AG(M)$. So $M$ is not a prime module, and
hence $rm=0$ such that $0\neq m\in M$, $r\notin Ann(M)$. It is
easy to see that $(rM)(0:_{M}r)=(0)$. If the set of $R$-submodules
of $rM$ (resp., $(0:_{M}r)$)) is infinite, then $(0:_{M}r)$
(resp., $rM$) has infinite degree, a contradiction. Thus $rM$ and
$(0:_{M}r)$ have finite length. Since $rM\cong M/(0:_{M}r)$, $M$
has finite length so that $R/Ann(M)$ is an Artinian ring. As in
the proof of part (a), $M\cong M_{1}\times M_{2}$. If $M_{1}$ has
one non-trivial submodule $N$, then $deg((0) \times M_{2}) > deg(N
\times M_{2})$ and this contradicts the regularity of $AG(M)$.
Hence, $M_{1}$ is a simple module. Similarly, $M_{2}$ is a simple
module. So $AG(M)\cong K_{2}$. Now suppose that
$(R/Ann(M),m/Ann(M))$ is an Artinian local ring. Now as we have
seen in part (a), there exists a natural number $k$ such that
$m^{k-1}M$ is adjacent to all other vertices and we deduce that
$AG(M)$ is a complete graph.

\end{proof}

Let $S$ be a multiplicatively closed subset of $R$. A non-empty
subset $S^{*}$ of $M$ is said to be $S$-closed if $se\in S^{*}$
for every $s\in S$ and $e\in S^{*}$. An $S$-closed subset $S^{*}$
is said to be saturated if the following condition is satisfied:
whenever $ae\in S^{*}$ for $a\in R$ and $e\in M$, then $a\in S$
and $e\in S^{*}$.

We need the following result due to Chin-Pi Lu.

\begin{thm}\label{t2.10} $($See \cite[Theorem 4.7]{lu97}.$)$
Let $M=Rm$ be a cyclic module. Let $S^{*}$ be an $S$-closed subset
of $M$ relative to a multiplicatively closed subset $S$ of $R$,
and $N$ a submodule of $M$ maximal in $M\setminus S^{*}$. If
$S^{*}$ is saturated, then ideal $(N:M)$ is maximal in $R\setminus
S$ so that $N$ is prime in $M$.
\end{thm}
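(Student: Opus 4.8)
The plan is to reduce the module-theoretic assertion to the classical ring-theoretic fact that an ideal maximal among those disjoint from a multiplicatively closed set is prime, and the hypothesis that $S^{*}$ is saturated is precisely what makes this reduction possible, so I would extract its consequences first. Since $S^{*}$ is non-empty, pick $e\in S^{*}$ and write $e=bm$ for some $b\in R$ (possible because $M=Rm$). Then $bm\in S^{*}$, so saturation forces $b\in S$ and, crucially, $m\in S^{*}$. Combining $m\in S^{*}$ with the $S$-closedness and saturation of $S^{*}$ gives the clean description $S^{*}=Sm=\{sm : s\in S\}$. I would also record the elementary identity $(N:M)=(N:m):=\{r\in R : rm\in N\}$, valid for any submodule of the cyclic module $M=Rm$, which is the dictionary translating between submodules and ideals.

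With these preliminaries, I would first verify that $(N:M)\cap S=\emptyset$: if some $s\in S$ lay in $(N:M)$, then $sm\in N\cap Sm=N\cap S^{*}$, contradicting $N\cap S^{*}=\emptyset$. Next I would prove maximality of $(N:M)$ in $R\setminus S$. Suppose $I$ is an ideal with $(N:M)\subsetneq I$ and $I\cap S=\emptyset$. Since $I\nsubseteq(N:M)=(N:m)$, the submodule $Im$ is not contained in $N$, so $N+Im\supsetneq N$. By the maximality of $N$ among submodules meeting $S^{*}=Sm$ trivially, $(N+Im)\cap Sm\neq\emptyset$; writing $sm=n+am$ with $s\in S$, $n\in N$, $a\in I$ yields $(s-a)m\in N$, hence $s-a\in(N:M)\subseteq I$ and therefore $s\in I$. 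This contradicts $I\cap S=\emptyset$, so $(N:M)$ is maximal in $R\setminus S$.

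It remains to deduce that $N$ is prime. The maximality just proved, together with the standard argument that an ideal maximal among ideals disjoint from a multiplicatively closed set is prime (if $ab\in(N:M)$ with $a,b\notin(N:M)$, enlarge to $(N:M)+(a)$ and $(N:M)+(b)$, each of which must meet $S$, and multiply the two witnesses to force $S\cap(N:M)\neq\emptyset$), shows that $(N:M)$ is a prime ideal. Finally, $N\neq M$ because $m\in S^{*}$ cannot lie in $N$; and if $re\in N$ with $e\notin N$, writing $e=bm$ gives $rb\in(N:m)=(N:M)$, whence $r\in(N:M)$ or $b\in(N:M)$ by primeness---the latter would force $e=bm\in N$, so $r\in(N:M)$, proving $N$ prime. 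I expect the only delicate point to be the correct use of the saturation hypothesis to obtain $m\in S^{*}$ and hence $S^{*}=Sm$; once the problem is transported to ideals in this way, the maximality computation and the passage back to primeness of $N$ are routine.
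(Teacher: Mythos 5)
Your proof is correct. Note that the paper itself offers no argument for this statement: it is quoted verbatim from Chin-Pi Lu's \emph{Unions of prime submodules} (Theorem 4.7 there), so there is no internal proof to compare against. Your reduction is exactly the natural one and all the steps check out: saturation applied to $1\cdot m=bm\cdot(\text{unit factor})$ -- more precisely, to any element $bm\in S^{*}$ -- forces $m\in S^{*}$, and then $S$-closedness plus saturation give $S^{*}=Sm$; the identity $(N:M)=(N:m)$ for cyclic $M$ is the right dictionary; the computation $sm=n+am\Rightarrow s-a\in(N:M)\subseteq I\Rightarrow s\in I$ correctly shows that every ideal properly containing $(N:M)$ and disjoint from $S$ cannot exist; and the passage from primeness of the ideal $(N:M)$ back to primeness of the submodule $N$ (using $m\in S^{*}$ to see $N\neq M$, and $e=bm$ to reduce $re\in N$ to $rb\in(N:M)$) is sound. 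The one point worth flagging is that you implicitly rely on $S^{*}\neq\emptyset$ to get started; this is built into the paper's definition of an $S$-closed subset, so the argument is complete as written.
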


\begin{thm}\label{t2.11}
If $M$ is a cyclic module, $Ann(M)$ is a nil ideal, and
$|Min(M)|\geq 3$, then $AG(M)$ contains a cycle.
\end{thm}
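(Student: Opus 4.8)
The plan is to build a triangle (hence a cycle) directly from the three minimal prime submodules, after translating the problem into the ring $R/\mathrm{Ann}(M)$. First I would record two facts about a cyclic $M=Rm$. Writing $I=\mathrm{Ann}(M)$, every submodule has the form $N=J/I$ with $I\subseteq J\subseteq R$ and $(N:M)=J$; consequently $M$ is never a vertex of $AG(M)$ (that would force $(N:M)=I$, i.e. $N=(0)$), and for two submodules $N=J/I$, $K=J'/I$ one has $NK=(0)$ iff $JJ'\subseteq I$. Thus $AG(M)$ is exactly the annihilating-ideal graph of $S:=R/I$, and the minimal prime submodules of $M$ correspond to the minimal primes $\mathfrak p_1,\mathfrak p_2,\mathfrak p_3\supseteq I$ of $R$; since $I$ is nil these are precisely the minimal primes of $R$ and $\bigcap_{\mathfrak p\ \mathrm{minimal}}\mathfrak p=\sqrt I=\mathrm{Nil}(R)$.

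For the construction I would set $\mathfrak a_i=\bigcap_{j\neq i}\mathfrak p_j$, the intersection of all minimal primes except $\mathfrak p_i$. Prime avoidance gives $\mathfrak a_i\nsubseteq\mathfrak p_i$ (otherwise some $\mathfrak p_j\subseteq\mathfrak p_i$), so $\mathfrak a_i\nsubseteq I$ and $\mathfrak a_iM\neq(0)$, while $\mathfrak a_i\subseteq\mathfrak p_j\subsetneq R$ makes $\mathfrak a_iM$ proper; moreover $\mathfrak a_i\nsubseteq\mathfrak p_i$ but $\mathfrak a_j\subseteq\mathfrak p_i$ for $j\neq i$, so the three submodules are pairwise distinct. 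By construction $\mathfrak a_i\mathfrak a_j\subseteq\bigcap_k\mathfrak p_k=\mathrm{Nil}(R)$ for $i\neq j$. The remaining point is to upgrade ``contained in the nilradical'' to ``contained in $I$'': choosing $t$ with $\mathrm{Nil}(R)^t\subseteq I$ and replacing $\mathfrak a_i$ by $\mathfrak a_i^t$ (still $\nsubseteq\mathfrak p_i$ since $\mathfrak p_i$ is prime), I would get $\mathfrak a_i^t\mathfrak a_j^t=(\mathfrak a_i\mathfrak a_j)^t\subseteq I$. Then $N_i:=\mathfrak a_i^tM$ are three distinct nonzero proper submodules with $N_iN_j=(0)$ for $i\neq j$, i.e. a triangle in $AG(M)$.

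The step that needs real work — and where the nil hypothesis must be spent — is exactly the passage from $\mathrm{Nil}(R)$ to $I$, together with the reduction to finitely many minimal primes (the prime avoidance and the defining intersections are only innocent when $\mathrm{Min}(M)$ is finite). I expect to secure both by first arguing, via the tree dichotomy, that the troublesome configurations are already excluded: if $AG(M)$ had no cycle it would be connected of diameter $\le 3$ by \cite[Theorem 3.4]{ah14}, hence a tree, so by Theorem \ref{t2.7} a star or $P_4$. In the $P_4$ case $M\cong F\times S$ with $F$ simple and $S$ having a unique non-trivial submodule $N$, and Proposition \ref{p2.1}(e) shows the only prime submodules are $(0)\times S$ and $F\times N$, so $|\mathrm{Min}(M)|=2$, contradicting the hypothesis. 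It remains to rule out the star, which is the crux; here I would feed the nil hypothesis into Lemmas \ref{l2.3} and \ref{l2.4} (lifting an idempotent to split $M$ and invoking Lemma \ref{l2.6}, or reducing to an Artinian local quotient with a single minimal prime) to force $|\mathrm{Min}(M)|\le 2$, and/or use Lu's Theorem \ref{t2.10} to realize each $\mathfrak p_i$ as an exact annihilator so that the products $N_iN_j$ vanish on the nose, bypassing the nilpotency of $\mathrm{Nil}(R)/I$ altogether. Controlling these nilpotents, and excluding the infinite star, is the main obstacle.
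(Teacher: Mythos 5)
Your reduction of $AG(M)$ to the annihilating-ideal graph of $R/\mathrm{Ann}(M)$ is sound, and your tree-dichotomy fallback (star or $P_4$, with $P_4$ excluded by counting minimal primes) is exactly the paper's opening move; but the proof has a genuine gap at precisely the point you flag, namely the star case, and the direct construction you offer does not close it. The step ``choose $t$ with $\mathrm{Nil}(R)^t\subseteq I$'' is unavailable: the hypothesis is only that $I=\mathrm{Ann}(M)$ is nil, so $\mathrm{Nil}(R)/I$ is a nil ideal of $R/I$, and a nil ideal need not be nilpotent --- no power of the ideal $\mathfrak{a}_i\mathfrak{a}_j$ need land in $I$ even though every element of it is nilpotent. (An element-wise variant --- pick $a_i\in\mathfrak{a}_i\setminus\mathfrak{p}_i$, note that $a_ia_j$ is a nilpotent \emph{element}, and use the submodules $a_i^tM$ --- would repair this, but only when $\mathrm{Min}(M)$ is finite; otherwise $\bigcap_{j\neq i}\mathfrak{p}_j\subseteq\mathfrak{p}_i$ is possible, and restricting the intersection to the three chosen primes no longer puts $\mathfrak{a}_i\mathfrak{a}_j$ inside the nilradical.) Since you never reduce to finitely many minimal primes, both obstacles remain, and your final paragraph only lists candidate tools without resolving them.

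What the paper actually does in the star case, and what your sketch does not reach, is this. Take the center $N$ of the star to be a minimal submodule. If $N^2\neq(0)$, Lemma \ref{l2.4} gives $N=eM$, so $M\cong eM\times(1-e)M$, and Proposition \ref{p2.1} with Lemma \ref{l2.6} force $|\mathrm{Min}(M)|=2$, a contradiction; hence $N=Rm$ with $(Rm)^2=(0)$, which yields $(Rm:M)\subseteq(P:M)$ and so $Rm\subseteq P$ for every minimal prime submodule $P$. Now choose $z\in(P_1:M)\setminus(P_2:M)$ and form the $S$-closed subset $S^*=S_1S_2$ of $M$ with $S_1=\{1,z,z^2,\dots\}$ and $S_2=M\setminus P_1$. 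If $0\notin S^*$, a submodule maximal among those disjoint from $S^*$ is prime by Lu's Theorem \ref{t2.10}, is contained in $P_1$, hence equals $P_1$ by minimality --- impossible, since $z$ avoids the colon ideal of such a maximal submodule. Therefore $z^km'=0$ for some $k$ and some $m'\notin P_1$, and the submodules $(m)$, $(m')$, $z^kM$ form a triangle inside the supposed star. This saturation argument is the missing idea: it is what lets the paper bypass both the non-nilpotency of $\mathrm{Nil}(R)/I$ and any finiteness assumption on $\mathrm{Min}(M)$.
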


\begin{proof}
If $AG(M)$ is a tree, then by Theorem \ref{t2.7}, either $AG(M)$
is a star graph or $M\cong F \times S$, where $F$ is a simple
module and $S$ has a unique non-trivial submodule. The latter case
is impossible because $|Min(F \times S)| = 2$. Suppose that
$AG(M)$ is a star graph and $N$ is the center of star. Clearly,
one can assume that $N$ is a minimal submodule of $M$. If
$N^{2}\neq (0)$, then by Lemma \ref{l2.4}, there exists an
idempotent $e\in R$ such that $N=eM$ so that $M\cong eM \times (1
- e)M$. Now by Proposition \ref{p2.1} and Lemma \ref{l2.6}, we
conclude that $|Min(M)|=2$, a contradiction. Hence $N^{2}= 0$.
Thus one may assume that $N = Rm$ and $(Rm)^{2} = (0)$. Suppose
that $P_{1}$ and $P_{2}$ are two distinct minimal prime submodules
of $M$. Since $(Rm)^{2} = (0)$, we have $(Rm:M)^{2}\subseteq
Ann(M)\subseteq (P_{i}:M)$, $i=1,2$. So $(Rm:M)M=Rm\subseteq
P_{i}$, $i=1,2$. Hence $m\in P_{i}$, $i=1,2$. Choose $z\in
(P_{1}:M)\setminus (P_{2}:M)$ and set $S_{1}=\{1, z, z^{2}, \ldots
\}$, $S_{2}=M\setminus P_{1}$, and $S^{*}=S_{1}S_{2}$. If $0\notin
S^{*}$, then $\Sigma =\{N< M|$ $N\cap S^{*}=\emptyset\}$ is not
empty. Then $\Sigma$ has a maximal element, say $N$. Hence by
Theorem \ref{t2.10}, $N$ is a prime submodule of $M$. Since
$N\subseteq P_{1}$, we have $N=P_{1}$, a contradiction because
$z\notin (N:M)$. So $0\in S^{*}$. Therefore, there exist positive
integer $k$ and $m'\in S_{2}$ such that $z^{k}m'=0$. Now consider
the submodules $(m), (m')$, and $z^{k}M$. It is clear that
$(m)\neq (m')$ and $(m)\neq z^{k}M$. If $(m)=z^{k}M$, then $z\in
(P_{2}:M)$, a contradiction. Thus $(m), (m')$, and $z^{k}M$ form a
triangle in $AG(M)$, a contradiction. Hence $AG(M)$ contains a
cycle.
\end{proof}

\begin{thm}\label{t2.12}
Suppose that $M$ is a cyclic module, $rad_{M}(0)\neq (0)$, and
$Ann(M)$ is a nil ideal. If $|Min(M)|=2$, then either $AG(M)$
contains a cycle or $AG(M)\cong P_{4}$.
\end{thm}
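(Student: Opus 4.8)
The plan is to prove the dichotomy by contradiction: assume $AG(M)$ contains no cycle and show this forces $AG(M)\cong P_{4}$. First I would invoke \cite[Theorem 3.4]{ah14}, which gives $diam(AG(M))\leq 3$ and in particular that $AG(M)$ is connected; a connected acyclic graph is a tree, so Theorem \ref{t2.7} applies and tells us that $AG(M)$ is either a star graph or $AG(M)\cong P_{4}$ (small degenerate graphs are stars and are covered directly by Theorem \ref{t2.7}). In the latter case we are done, so the entire argument reduces to deriving a contradiction from the assumption that $AG(M)$ is a star graph. Suppose then that $AG(M)$ is a star with center $N$. Exactly as in the proof of Theorem \ref{t2.11}, one may assume that $N$ is a minimal submodule of $M$, since any minimal submodule $N'\subseteq N$ satisfies $(N':M)\subseteq (N:M)$ and hence remains adjacent to every leaf. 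I would then split into the cases $N^{2}\neq (0)$ and $N^{2}=(0)$.

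The case $N^{2}\neq (0)$ is where the argument departs essentially from Theorem \ref{t2.11}, and I expect it to be the main obstacle. Here Lemma \ref{l2.4} provides an idempotent $e\in R$ with $N=eM$, so that $M\cong eM\times (1-e)M$; since $AG(M)$ has no cycle, Lemma \ref{l2.6} forces $M=F\times S$ or $M=F\times D$, with $F$ simple, $S$ having a unique non-trivial submodule, and $D$ prime. The case $M=F\times S$ is excluded because Theorem \ref{t2.7} would then give $AG(M)\cong P_{4}$ rather than a star. In Theorem \ref{t2.11} the hypothesis $|Min(M)|\geq 3$ immediately contradicted the equality $|Min(M)|=2$ produced by such a decomposition; but here $|Min(M)|=2$ is precisely our hypothesis, so the subcase $M=F\times D$ survives and must be ruled out by a different device. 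This is exactly where the hypothesis $rad_{M}(0)\neq (0)$ is needed: using Proposition \ref{p2.1}(e) one computes $rad_{M}(0)=rad_{M_{1}}(0)\times rad_{M_{2}}(0)$, and since a simple module and a prime module each have zero prime radical, $M=F\times D$ would give $rad_{M}(0)=(0)$, contradicting the hypothesis. Hence $N^{2}=(0)$.

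In the remaining case $N^{2}=(0)$, I would reproduce the triangle construction from the end of the proof of Theorem \ref{t2.11}, which relies only on the existence of two distinct minimal prime submodules $P_{1},P_{2}$, guaranteed here by $|Min(M)|=2$. Writing $N=Rm$ with $(Rm)^{2}=(0)$ and using that $M$ is cyclic, one obtains $m\in P_{1}\cap P_{2}$; choosing $z\in (P_{1}:M)\setminus (P_{2}:M)$ (possible since minimal primes are incomparable and $P_{i}=(P_{i}:M)M$) and setting $S_{1}=\{1,z,z^{2},\ldots\}$, $S_{2}=M\setminus P_{1}$, $S^{*}=S_{1}S_{2}$, Theorem \ref{t2.10} rules out the possibility $0\notin S^{*}$, so there exist $k\geq 1$ and $m'\in M\setminus P_{1}$ with $z^{k}m'=0$. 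The key computation, again using cyclicity (so that $(Rm':M)=Rt+Ann(M)$ when $m'=ta$, together with $Ann(M)M=(0)$), is $(Rm')(z^{k}M)=R(z^{k}m')=(0)$; combined with the fact that the center $N=(m)$ is adjacent to both $(m')$ and $z^{k}M$, and that $(m)\neq z^{k}M$ (else $z\in (P_{2}:M)$), this produces a triangle on $(m),(m'),z^{k}M$, contradicting that $AG(M)$ is a tree. This contradiction disposes of the star case, so $AG(M)\cong P_{4}$, as desired.
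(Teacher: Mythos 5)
Your proposal is correct and takes essentially the same route as the paper: the published proof of Theorem \ref{t2.12} consists only of the remark that ``a similar argument to the proof of Theorem \ref{t2.11}'' yields either a cycle or $M\cong F\times S$, together with the observation that $rad_{F\times D}(0)=(0)$. Your write-up is exactly that argument made explicit --- you correctly locate the two places where Theorem \ref{t2.11} used $|Min(M)|\geq 3$, keep the triangle construction verbatim for the $N^{2}=(0)$ case, and use the hypothesis $rad_{M}(0)\neq (0)$ to eliminate the surviving subcase $M=F\times D$.
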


\begin{proof}
A similar argument to the proof of Theorem \ref{t2.11} shows that
either $AG(M)$ contains a cycle or $M\cong F\times S$, where $F$
is a simple module and $S$ is a module with a unique non-trivial
submodule. The latter case implies that $AG(M)\cong P_{4}$ (note
that $rad_{F\times D}(0)=(0)$, where $F$ is a simple module and
$D$ is a prime module).
\end{proof}

We recall that $N< M$ is said to be a semiprime submodule of $M$
if for every ideal $I$ of $R$ and every submodule $K$ of $M$,
$I^{2}K\subseteq N$ implies that $IK\subseteq N$. Further $M$ is
called a semiprime module if $(0)\subseteq M$ is a semiprime
submodule. Every intersection of prime submodules is a semiprime
submodule (see \cite{tv08}).

\begin{thm}\label{t2.13}
Let $S$ be a maltiplicatively closed subset of $R$ containing no
zero-divisors on finitely generated module $M$. Then
$cl(AG(M_{S}))\leq cl(AG(M))$. Moreover, $AG(M_{S})$ is a retract
of $AG(M)$ if $M$ is a semiprime module. In particular,
$cl(AG(M_{S}))= cl(AG(M))$, whenever $M$ is a semiprime module.
\end{thm}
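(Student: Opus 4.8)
The plan is to transport cliques, and then the whole graph, between $M$ and $M_{S}$ through the localization/saturation correspondence, using crucially that $S$ contains no zero-divisors on $M$. First I would record the localization facts I rely on. Since no element of $S$ is a zero-divisor on $M$, the canonical map $M\to M_{S}$, $m\mapsto m/1$, is injective; consequently, for a submodule $N\leq M$ one has $N_{S}=(0)$ if and only if $N=(0)$. Because $M$ is finitely generated, $(N:M)_{S}=(N_{S}:M_{S})$ for every $N\leq M$, and therefore the product localizes, $(NK)_{S}=N_{S}K_{S}$. Finally, for every submodule $L\leq M_{S}$ the contraction $L^{c}=\{m\in M:\ m/1\in L\}$ satisfies $(L^{c})_{S}=L$; hence contraction is injective on submodules of $M_{S}$, and combining this with the previous identity gives the equivalence $NK=(0)\iff (NK)_{S}=(0)$ for submodules of $M$.

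For the inequality $cl(AG(M_{S}))\leq cl(AG(M))$ I would take a clique $\{L_{i}\}_{i\in I}$ of $AG(M_{S})$ and contract it to $\{L_{i}^{c}\}_{i\in I}$. Injectivity of contraction keeps these vertices distinct. For $i\neq j$ we have $L_{i}L_{j}=(0)$, so $(L_{i}^{c}L_{j}^{c})_{S}=(L_{i}^{c})_{S}(L_{j}^{c})_{S}=L_{i}L_{j}=(0)$, whence $L_{i}^{c}L_{j}^{c}=(0)$ by the no-zero-divisor criterion; thus the contracted vertices are pairwise adjacent. It remains to see that each $L_{i}^{c}$ is a vertex of $AG(M)$. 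Each $L_{i}^{c}$ is nonzero (else $L_{i}=(L_{i}^{c})_{S}=(0)$). If some clique member equals $M_{S}$, then a short computation with the colon ideal, together with the note in the Introduction, shows that $M$ is a vertex of $AG(M)$, hence every nonzero submodule of $M$ is a vertex and we are done; otherwise each $L_{j}^{c}$ is a nonzero proper submodule, so for a fixed $i$ any $L_{j}^{c}$ with $j\neq i$ witnesses that $L_{i}^{c}$ is a vertex. Therefore contraction carries cliques of $AG(M_{S})$ injectively to cliques of $AG(M)$, giving the inequality.

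Now assume $M$ is semiprime. Set $H=\{L^{c}:\ L\in V(AG(M_{S}))\}$. By the displayed equivalences, for $L,L'\in V(AG(M_{S}))$ we have $L^{c}L'^{c}=(0)$ if and only if $LL'=(0)$, and contraction is injective; moreover each $L^{c}$ is a vertex of $AG(M)$ (its witness is $K^{c}$, where $K$ is a nonzero proper witness for $L$). Hence contraction is a graph isomorphism of $AG(M_{S})$ onto the induced subgraph $H$ of $AG(M)$, and it suffices to exhibit a retraction $\phi\colon AG(M)\to H$. The natural candidate is the saturation $\phi(N)=(N_{S})^{c}=\{m\in M:\ sm\in N\ \text{for some}\ s\in S\}$. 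This $\phi$ fixes $H$ pointwise, since $\phi(L^{c})=((L^{c})_{S})^{c}=L^{c}$. For an edge $\{N,N'\}$ of $AG(M)$ one has $NN'=(0)$, so $(\phi(N)\phi(N'))_{S}=N_{S}N'_{S}=(NN')_{S}=(0)$ and hence $\phi(N)\phi(N')=(0)$; thus edges are sent to pairs of adjacent-or-equal vertices.

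The crux, and the step where semiprimeness of $M$ is indispensable, is to upgrade this to a genuine graph homomorphism into $H$, that is, to prove $\phi(N)\in H$ and $\phi(N)\neq\phi(N')$ for every edge $\{N,N'\}$. The first point requires $N_{S}$ to be a vertex of $AG(M_{S})$ for each vertex $N$; here $N_{S}\neq(0)$ is automatic, and semiprimeness of $M_{S}$ (inherited from $M$) is used to control the colon ideals and to rule out the degenerate value $N_{S}=M_{S}$. The second point is the real obstacle: if $\phi(N)=\phi(N')$ then $N_{S}=N'_{S}$ and $N_{S}^{2}=(NN')_{S}=(0)$, and I would use that $M_{S}$ is semiprime to analyze the resulting submodule with $(N_{S}:M_{S})=Ann(M_{S})$ and to conclude that distinct adjacent $N,N'$ cannot then share a saturation. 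I expect this collapse-avoidance argument to be the hardest and most delicate part; once it is in place, $\phi$ is a retraction, so $H\cong AG(M_{S})$ is a retract of $AG(M)$. Finally, a retraction is in particular a graph homomorphism $AG(M)\to H$, which maps any clique injectively onto a clique of $H\cong AG(M_{S})$, giving $cl(AG(M))\leq cl(AG(M_{S}))$; together with the first part this yields $cl(AG(M_{S}))=cl(AG(M))$.
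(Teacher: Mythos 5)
Your first part is sound and is essentially the paper's own argument read backwards: the paper works with the localization map $N\mapsto N_{S}$, notes that it is surjective onto $V(AG(M_{S}))$ and that $NK=(0)$ if and only if $N_{S}K_{S}=(0)$, which is exactly your contraction of cliques (every submodule of $M_{S}$ is the localization of its contraction). The problem is the second part. You correctly isolate the crux --- showing that adjacent vertices $N\neq N'$ of $AG(M)$ cannot have $N_{S}=N'_{S}$, so that $N\mapsto N_{S}$ is a genuine graph homomorphism --- but you then explicitly decline to prove it (``I would use that $M_{S}$ is semiprime to analyze\dots'', ``I expect this collapse-avoidance argument to be the hardest and most delicate part''). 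A proposal that defers its decisive step is a proposal with a gap: as written, neither the retract claim nor the equality $cl(AG(M_{S}))=cl(AG(M))$ is established.

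For the record, the missing step is short once set up as in the paper. Reduce first to the case that $M$ is not a vertex of $AG(M)$ (the paper does this ``without loss of generality''). If $N\neq N'$, $NN'=(0)$ and $N_{S}=N'_{S}$, then $N_{S}^{2}=N_{S}N'_{S}=(NN')_{S}=(0)$, so $(N^{2})_{S}=(0)$ and hence $N^{2}=(0)$, since $S$ contains no zero-divisors on $M$. Writing $N^{2}=(N:M)^{2}M$ and applying semiprimeness of $M$ with $I=(N:M)$ and $K=M$ gives $(N:M)M=(0)$, i.e.\ $(N:M)=Ann(M)$; as $N$ is a nonzero proper submodule, the remark in the Introduction forces $M$ to be a vertex of $AG(M)$, a contradiction. (Whether the ``without loss of generality'' disposing of the case where $M$ is a vertex is fully justified is a separate issue, but that is a defect you would share with the paper, not one introduced by your write-up.) Your other worry, that $N_{S}$ might fail to be a vertex of $AG(M_{S})$, is handled by the same localization identities: a witness $K$ for $N$ localizes to a witness $K_{S}$ for $N_{S}$. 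So the ingredients you name ($N_{S}^{2}=(0)$, semiprimeness, the colon ideal collapsing to the annihilator) are the right ones; you simply stopped before assembling them.
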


\begin{proof}
Consider a vertex map $\phi: V(AG(M)) \longrightarrow
V(AG(M_{S})), N\longrightarrow N_{S}$. Clearly, $N_{S}\neq K_{S}$
implies $N\neq K$ and $NK = (0)$ if and only if $N_{S} K_{S}=
(0)$. Thus $\phi$ is surjective and hence $cl(AG(M_{S}))\leq
cl(AG(M))$. In what follows, we assume that $M$ a semiprime
module. If $N \neq K$ and $NK = (0)$, then we show that $N_{S}\neq
K_{S}$. Without loss of generality we can assume that $M$ is not a
vertex of $AG(M)$ and On the contrary suppose that $N_{S}= K_{S}$.
Then $N_{S}^{2} =N_{S}K_{S}=(NK)_{S} =(0)$ and so $N^{2}=(0)$, a
contradiction. This shows that the map $\phi$ is a graph
homomorphism. Now, for any vertex  $N_{S}$ of $AG(M_{S})$, we can
choice the fixed vertex $N$ of $AG(M)$. Then $\phi$ is a retract
(graph) homomorphism which clearly implies that $cl(AG(M_{S}))=
cl(AG(M))$ under the assumption.
\end{proof}

\begin{cor}\label{c2.14}
If $M$ is a  finitely generated semiprime module, then
$cl(AG(T(M))= cl(AG(M))$, where $T=R\setminus Z(M)$.
\end{cor}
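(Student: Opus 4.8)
The plan is to recognize this corollary as a direct specialization of Theorem \ref{t2.13} to the multiplicatively closed set $S = T = R \setminus Z(M)$. The entire substantive content is the verification that this particular $T$ satisfies the two standing hypotheses of that theorem---namely, that $T$ is multiplicatively closed and that $T$ contains no zero-divisors on $M$---after which the ``in particular'' clause of Theorem \ref{t2.13} delivers the equality $cl(AG(M_T)) = cl(AG(M))$ at once, using that $M$ is finitely generated and semiprime.

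First I would check that $T$ is multiplicatively closed. Since $1 \cdot m = m$ for all $m$, we have $1 \in T$. If $s, t \in T$ and $(st)m = 0$ for some $m \in M$, then $s(tm) = 0$; because $s \notin Z(M)$ this forces $tm = 0$, and because $t \notin Z(M)$ this in turn forces $m = 0$. Hence $st \notin Z(M)$, that is, $st \in T$, so $T$ is multiplicatively closed. Second, the requirement that $T$ contain no zero-divisors on $M$ is immediate: by definition $T = R \setminus Z(M)$ is precisely the set of elements of $R$ that are regular on $M$.

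With these two facts in hand, Theorem \ref{t2.13} applies verbatim with $S = T$. Since $M$ is assumed to be a finitely generated semiprime module, the final assertion of that theorem yields $cl(AG(M_T)) = cl(AG(M))$, where $M_T = T^{-1}M$ is exactly the localization $T(M)$ appearing in the statement. There is essentially no obstacle to overcome here: the only points that genuinely require attention are the two elementary closure and regularity checks above, both of which follow straight from the definition of $Z(M)$, while the nontrivial inequality $cl(AG(M_T)) \leq cl(AG(M))$ and the retract construction witnessing the reverse inequality are already supplied by Theorem \ref{t2.13}.
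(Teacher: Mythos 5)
Your proposal is correct and matches the paper's (implicit) argument: the corollary is stated without proof precisely because it is the specialization of Theorem \ref{t2.13} to $S = T = R\setminus Z(M)$, and your two verifications---that $T$ is multiplicatively closed and that it contains no zero-divisors on $M$---are exactly the routine checks needed to invoke that theorem.
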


Since the chromatic number $\chi(G)$ of a graph $G$ is the least
positive integer $r$ such that there exists a retract homomorphism
$\psi: G\longrightarrow K_{r}$, the following corollaries follow
directly from the proof of Theorem \ref{t2.13}.

\begin{cor}\label{c2.15}
Let $S$ be a maltiplicatively closed subset of $R$ containing no
zero-divisors on finitely generated module $M$. Then
$\chi(AG(M_{S}))\leq \chi(AG(M))$. Moreover, if $M$ is a semiprime
module, then $\chi(AG(M_{S}))= \chi(AG(M))$.
\end{cor}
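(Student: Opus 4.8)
The plan is to read off both inequalities from the two vertex maps already produced in the proof of Theorem \ref{t2.13}, together with the standard characterization that $\chi(G)$ is the least $r$ admitting a graph homomorphism $G\to K_{r}$ (a homomorphism into $K_{r}$ being exactly a proper $r$-colouring, and homomorphisms composing). So the whole argument reduces to composing maps in the correct direction, and no genuinely new module-theoretic work is needed.

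For the general inequality $\chi(AG(M_{S}))\leq\chi(AG(M))$ I would first build a section of $\phi$. Since $\phi\colon V(AG(M))\to V(AG(M_{S}))$, $N\mapsto N_{S}$, is surjective, choose for each vertex $N_{S}$ of $AG(M_{S})$ a fixed preimage $N$, obtaining a vertex map $\iota\colon V(AG(M_{S}))\to V(AG(M))$. This $\iota$ is a graph homomorphism: if $N_{S}K_{S}=(0)$ then, as recorded at the start of the proof of Theorem \ref{t2.13} (using that every $s\in S$ is a non-zero-divisor on $M$), one has $NK=(0)$, so $\iota$ carries edges to edges. Precomposing an optimal colouring $AG(M)\to K_{\chi(AG(M))}$ with $\iota$ gives a homomorphism $AG(M_{S})\to K_{\chi(AG(M))}$, hence $\chi(AG(M_{S}))\leq\chi(AG(M))$. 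This step uses only that $S$ avoids zero-divisors on $M$, not semiprimeness.

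For the reverse inequality when $M$ is semiprime I would use $\phi$ directly. Theorem \ref{t2.13} shows that, in the semiprime case, $\phi$ is itself a graph homomorphism $AG(M)\to AG(M_{S})$ (semiprimeness is exactly what excludes $N_{S}=K_{S}$ on an edge $\{N,K\}$). Composing $\phi$ with an optimal colouring $AG(M_{S})\to K_{\chi(AG(M_{S}))}$ yields a homomorphism $AG(M)\to K_{\chi(AG(M_{S}))}$, so $\chi(AG(M))\leq\chi(AG(M_{S}))$; together with the previous paragraph this gives the asserted equality.

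I expect the only point needing care to be the verification that $\iota$ and $\phi$ really preserve adjacency, and in particular the implication $N_{S}K_{S}=(0)\Rightarrow NK=(0)$. But this is precisely the equivalence $NK=(0)\Leftrightarrow N_{S}K_{S}=(0)$ established in Theorem \ref{t2.13} from the injectivity of multiplication by elements of $S$ on $M$; so there is no new obstacle here, and the corollary follows formally from the homomorphism description of the chromatic number.
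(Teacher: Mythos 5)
Your proposal is correct and is essentially the argument the paper intends: the paper gives no separate proof of Corollary \ref{c2.15}, stating only that it follows from the proof of Theorem \ref{t2.13} together with the characterization of $\chi(G)$ as the least $r$ admitting a homomorphism $G\to K_{r}$, and your write-up simply makes the two compositions (a section $\iota$ of $\phi$ for the inequality, and $\phi$ itself in the semiprime case) explicit. No gap.
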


\begin{cor}\label{c2.16}
If $M$ is a finitely generated semiprime module, then
$\chi(AG(T(M))= \chi(AG(M))$, where $T=R\setminus Z(M)$.
\end{cor}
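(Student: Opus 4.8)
The plan is to recognize Corollary \ref{c2.16} as the chromatic-number twin of Corollary \ref{c2.14}: just as Corollary \ref{c2.14} specializes Theorem \ref{t2.13} to the set $T=R\setminus Z(M)$, this statement specializes the equality clause of Corollary \ref{c2.15} to the same set. So the entire task reduces to checking that $T=R\setminus Z(M)$ is an admissible choice of the multiplicatively closed set $S$ appearing in Corollary \ref{c2.15}, after which the conclusion is immediate.

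First I would verify that $T$ is a multiplicatively closed subset of $R$. Clearly $1\in T$, since $1$ is a non-zero-divisor on $M$. For closure under multiplication, suppose $a,b\in T$ but $ab\in Z(M)$; then $(ab)m=0$ for some $0\neq m\in M$, so $a(bm)=0$, and since $a\notin Z(M)$ we get $bm=0$, whence $b\notin Z(M)$ forces $m=0$, a contradiction. Hence $ab\in T$. Moreover, by the very definition $T=R\setminus Z(M)$ the set $T$ contains no zero-divisors on $M$, so $T$ satisfies exactly the hypotheses imposed on $S$ in Corollary \ref{c2.15}.

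Since $M$ is finitely generated and semiprime by assumption, the equality clause of Corollary \ref{c2.15} now applies verbatim with $S=T$, yielding $\chi(AG(M_{T}))=\chi(AG(M))$, i.e. $\chi(AG(T(M)))=\chi(AG(M))$, as desired. There is essentially no obstacle here: the real content of the statement is carried entirely by the retract-homomorphism construction in the proof of Theorem \ref{t2.13}, transported to the chromatic number via the characterization of $\chi$ through retractions onto complete graphs recorded just before Corollary \ref{c2.15}. The only point requiring verification is the multiplicative closedness of the complement of $Z(M)$, which is the short computation above. I would therefore keep the written proof to a single sentence invoking Corollary \ref{c2.15} with $S=T$, optionally noting the closedness check for completeness.
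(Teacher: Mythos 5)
Your proposal is correct and matches the paper's intent: the paper offers no separate proof, stating only that Corollaries \ref{c2.15} and \ref{c2.16} ``follow directly from the proof of Theorem \ref{t2.13}'' via the retract-homomorphism characterization of $\chi$, and your argument is exactly this specialization of Corollary \ref{c2.15} to $S=T=R\setminus Z(M)$. The added check that $R\setminus Z(M)$ is multiplicatively closed and consists of non-zero-divisors on $M$ is correct and harmless, merely making explicit what the paper leaves implicit (as in Corollary \ref{c2.14}).
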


Eben Matlis in \cite[Proposition 1.5]{m83}, proved that if
$\{p_{1}, \ldots, p_{n}\}$ is a finite set of distinct minimal
prime ideals of $R$ and $S = R\setminus \cup^{n}_{i=1} p_{i}$,
then $R_{p_{1}}\times \ldots \times R_{p_{n}}\cong R_{S}$. In
\cite {s11}, this result was generalized to finitely generated
multiplication modules. In Theorem \ref{t2.18}, we use this
generalization for a cyclic module.

\begin{thm}\label{t2.17} $($See \cite[Theorem 3.11]{s11}.$)$
Let $\{P_{1}, \ldots, P_{n}\}$ be a finite set of distinct minimal
prime submodules of finitely generated multiplication module $M$
and $S = R\setminus \cup^{n}_{i=1} (P_{i}:M)$. Then
$M_{p_{1}}\times \ldots \times M_{p_{n}}\cong M_{S}$, where
$p_{i}=(P_{i}:M)$ for $1\leq i\leq n$.
\end{thm}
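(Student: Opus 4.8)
The plan is to reduce the module statement to Matlis's ring-theoretic result \cite[Proposition 1.5]{m83}, which is already available, and then to transport the resulting product decomposition from $R_S$ to $M_S$ by means of idempotents. First I would pass to the faithful case: set $\bar{R}=R/Ann(M)$, so that $M$ is a finitely generated faithful multiplication $\bar{R}$-module. Because $M$ is annihilated by $Ann(M)$, inverting the elements of $S$ over $R$ has exactly the same effect on $M$ as inverting their images $\bar{S}$ over $\bar{R}$; hence $M_S=M_{\bar{S}}$ as modules, and likewise $M_{p_i}=M_{\bar{p}_i}$, where $\bar{p}_i=p_i/Ann(M)$. Thus it suffices to prove the theorem when $M$ is faithful and each $p_i$ is a minimal prime of $R$.

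The key input from the multiplication hypothesis is the correspondence between minimal prime submodules of $M$ and minimal primes of $\bar{R}$. Since $M$ is a multiplication module, every submodule $N$ satisfies $N=(N:M)M$; in particular $P_i=p_iM$ with $p_i=(P_i:M)$. For any prime $p\supseteq Ann(M)$ we have $p\in V(Ann(M))=\mathrm{Supp}(M)$, so $M_p\neq(0)$, and Nakayama's lemma (applicable since $M$ is finitely generated) forces $p M\neq M$. Using that $M$ is multiplication, one checks that $P\mapsto(P:M)$ is an inclusion-preserving, injective map from prime submodules of $M$ to primes of $R$ containing $Ann(M)$ (injective because $P=(P:M)M$), under which minimal prime submodules correspond precisely to the minimal primes over $Ann(M)$. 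Consequently $\{p_1,\dots,p_n\}$ is a set of $n$ distinct minimal primes of the faithful ring $\bar{R}$, which is exactly the hypothesis of Matlis's theorem.

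Now I would invoke \cite[Proposition 1.5]{m83} to obtain a ring isomorphism $R_S\cong R_{p_1}\times\cdots\times R_{p_n}$; here I would also note that the only primes of $R$ disjoint from $S$ are $p_1,\dots,p_n$ themselves (by prime avoidance any such prime lies in some $p_i$, and minimality forces equality), so that $p_1R_S,\dots,p_nR_S$ are precisely the maximal ideals of $R_S$. Let $e_1,\dots,e_n\in R_S$ be the associated pairwise orthogonal idempotents with $\sum_i e_i=1$ and $R_Se_i\cong R_{p_i}$. Viewing $M_S$ as an $R_S$-module, this yields $M_S=\bigoplus_{i=1}^{n} e_iM_S$, where $e_iM_S$ is a module over $R_Se_i\cong R_{p_i}$. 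It remains to identify $e_iM_S$ with $M_{p_i}$. By transitivity of localization, localizing the $R_S$-module $M_S$ at the maximal ideal $m_i=p_iR_S$ gives $(M_S)_{m_i}=M_{p_i}$, since $S\subseteq R\setminus p_i$. On the other hand, $e_j$ maps to $0$ in $(R_S)_{m_i}=R_{p_i}$ for $j\neq i$ while $e_i$ maps to $1$, so $(M_S)_{m_i}=(e_iM_S)_{m_i}=e_iM_S$, the last equality because $e_iM_S$ is already a module over the local ring $R_{p_i}$ and hence equals its own localization at the maximal ideal. Therefore $e_iM_S\cong M_{p_i}$, and assembling the summands gives $M_S\cong M_{p_1}\times\cdots\times M_{p_n}$.

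I expect the main obstacle to be the correspondence step, namely verifying carefully that for a finitely generated multiplication module the minimal prime submodules are exactly the $pM$ with $p$ minimal over $Ann(M)$; this is where the multiplication hypothesis is essential and where one must rule out the degenerate possibility $pM=M$ (handled by Nakayama) and confirm that $(P:M)$ is genuinely minimal rather than merely prime. The transfer from the ring decomposition to the module decomposition is then essentially formal, the only subtlety being the bookkeeping with the idempotents and the repeated use of transitivity of localization; provided $M$ is finitely generated these steps present no real difficulty.
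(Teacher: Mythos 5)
The paper does not actually prove Theorem~\ref{t2.17}: it is imported verbatim from Samei \cite[Theorem 3.11]{s11}, so there is no internal argument to compare yours against. Judged on its own, your derivation is essentially correct and is the natural way to deduce the module statement from Matlis's ring-theoretic decomposition $R_{S}\cong R_{p_{1}}\times\cdots\times R_{p_{n}}$, which the authors themselves quote just before the theorem. Your reduction to $R/Ann(M)$ is genuinely needed (Matlis's proposition concerns minimal primes of the ring, whereas the $p_{i}$ are a priori only minimal over $Ann(M)$), and the transport of the resulting idempotent decomposition from $R_{S}$ to $M_{S}$, with the identification $e_{i}M_{S}=(M_{S})_{p_{i}R_{S}}=M_{p_{i}}$ via transitivity of localization, is correct; note only that after the reduction the idempotents live in $(R/Ann(M))_{S}$ rather than in $R_{S}$ itself, which is harmless since $M_{S}$ is a module over that quotient. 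The one substantive input you assert but do not prove is the correspondence $P\mapsto (P:M)$ between (minimal) prime submodules of a finitely generated multiplication module and (minimal) primes over $Ann(M)$ --- in particular that $qM$ is a prime submodule with $(qM:M)=q$ whenever $q\supseteq Ann(M)$ is prime and $qM\neq M$. This is precisely where the multiplication hypothesis does its work, and it is a theorem of El-Bast and Smith on multiplication modules rather than a routine verification, so it should be cited rather than dismissed with ``one checks.'' With that reference supplied, your argument constitutes a complete proof, and is more self-contained than the paper's bare citation.
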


\begin{thm}\label{t2.18}
Let $M$ be a cyclic module and $\{P_{1}, \ldots, P_{n}\}$ be a
finite set of distinct minimal prime submodules of $M$. Then there
exists a clique of size $n$.
\end{thm}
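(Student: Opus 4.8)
The plan is to isolate the $n$ minimal primes by localizing, read off an obvious clique in the resulting product, and then carry it back to $M$ by hand. Since $M$ is cyclic it is a finitely generated multiplication module, so write $M=Rm$. For $g\in R$ the cyclic submodule $gM=R(gm)$ satisfies $(gM:M)=Rg+Ann(M)$, whence $(gM)(hM)=(gM:M)(hM:M)M=Rgh\,M$, which equals $(0)$ exactly when $gh\in Ann(M)$. It therefore suffices to produce $g_1,\dots,g_n\in R$ with $g_ig_j\in Ann(M)$ for all $i\neq j$, each $g_iM\neq(0)$, and each $g_iM\neq M$: then the cyclic submodules $N_i=g_iM$ are distinct nonzero proper vertices that are pairwise adjacent in $AG(M)$, i.e. a clique of size $n$. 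Write $p_i=(P_i:M)$ for the distinct minimal primes over $Ann(M)$ and set $S=R\setminus\bigcup_{i=1}^{n}p_i$; we may assume $n\geq2$, the case $n=1$ being trivial.

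First I would apply Theorem \ref{t2.17} to get $M_S\cong M_{p_1}\times\cdots\times M_{p_n}$. As $M$ is cyclic, $M_S=(R/Ann(M))_S$ is itself a commutative ring and each factor $M_{p_i}$ is the nonzero local ring $(R/Ann(M))_{p_i}$; thus this is a decomposition of $M_S$ into a product of $n$ rings and furnishes pairwise orthogonal idempotents $e_1,\dots,e_n\in M_S$ with $e_iM_S$ the $i$-th factor and $e_1+\cdots+e_n=1$. By the product formula of Proposition \ref{p2.1}(d), applied to the $n$-fold product, the factor submodules satisfy $(e_iM_S)(e_jM_S)=(0)$ for $i\neq j$, so they form a clique of size $n$ and $cl(AG(M_S))\geq n$.

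The main obstacle is transporting this clique down to $M$. The natural move is to invoke Theorem \ref{t2.13} to conclude $cl(AG(M_S))\leq cl(AG(M))$, but its hypothesis that $S$ contain no zero-divisor on $M$ can genuinely fail: if $Ann(M)$ has an embedded associated prime not contained in any $p_i$, then $S$ meets $Z(M)$, and a clique in $AG(M_S)$ need not lift to submodules whose products vanish in $M$ rather than merely in $M_S$. I would therefore lift the idempotents explicitly. Choosing a common denominator, write $e_i=c_i/s$ with $c_i\in R$ and $s\in S$. The relations $e_ie_j=0$ ($i\neq j$) say that $t_{ij}c_ic_j\in Ann(M)$ for suitable $t_{ij}\in S$; setting $t=\prod_{i<j}t_{ij}\in S$ gives $t\,c_ic_j\in Ann(M)$ for all $i\neq j$. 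Put $g_i=t\,c_i$. Then $g_ig_j=t(t\,c_ic_j)\in Ann(M)$ for $i\neq j$, supplying exactly the adjacencies required above.

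Finally I would check, again by localizing, that each $N_i=g_iM$ is a genuine vertex and that the $N_i$ are distinct. The image of $g_i$ in $M_S$ is $(ts)e_i$, an invertible multiple of the nontrivial idempotent $e_i$ (nontrivial since $n\geq2$), so $(g_iM)_S=e_iM_S$ is the $i$-th factor: it is nonzero, whence $g_i\notin Ann(M)$ and $N_i\neq(0)$, and proper in $M_S$, whence $g_i$ is a non-unit modulo $Ann(M)$ and $N_i\neq M$; and the factors being distinct forces the $N_i$ to be distinct. Hence $\{g_1M,\dots,g_nM\}$ is a clique of size $n$ in $AG(M)$, as desired. The delicate point throughout is precisely the failure of the zero-divisor hypothesis of Theorem \ref{t2.13}, which is why the explicit idempotent lift — trading the localized identity $e_ie_j=0$ for a single $S$-multiplier $t$ that forces $g_ig_j\in Ann(M)$ on the nose — is the crux of the argument.
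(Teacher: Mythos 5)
Your proposal is correct and follows essentially the same route as the paper: localize at $S=R\setminus\bigcup p_i$, use Theorem \ref{t2.17} to split $M_S$ into $n$ factors, clear denominators with a single element $t=\prod t_{ij}\in S$ so that the localized orthogonality relations become genuine relations $g_ig_j\in Ann(M)$, and then verify nontriviality and distinctness of the cyclic submodules by localizing back. The only (cosmetic) difference is that you exploit the ring structure of $M_S\cong (R/Ann(M))_S$ and work with honest idempotents, whereas the paper works directly with the coordinate generators $n_i/t_i=\phi(0,\ldots,m/1,\ldots,0)$; both arguments implicitly assume $n\ge 2$, as the $n=1$ case requires $AG(M)$ to have a vertex at all.
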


\begin{proof}
Let $M$ be a cyclic module and $S = R\setminus \cup^{n}_{i=1}
p_{i}$, where $p_{i}=(P_{i}:M)$ for $1\leq i\leq n$. Then since
$M$ is a multiplication module, by Theorem \ref{t2.17}, there
exists an isomorphism $\phi : M_{p_{1}}\times \ldots \times
M_{p_{n}}\longrightarrow M_{S}$. Let $M=Rm, e_{i} = (0, \ldots ,
0, m/1, \ldots , 0, \ldots, 0 )$ and $\phi(e_{i})=n_{i}/t_{i}$ ,
where $m\in M$, $1\leq i \leq n$, and $m/1$ is in the $i$-th
position of $e_{i}$. Consider the principal submodules
$N_{i}=(n_{i}/t_{i})=(n_{i}/1)$ in the module $M_{S}$. By Lemma
\ref{l2.2} and Proposition \ref{p2.1}, the product of submodules
$(0)\times \ldots \times (0)\times (m/1)R_{p_{i}}\times (0)\times
\ldots \times (0)$ and $(0)\times \ldots \times (0)\times
(m/1)R_{p_{j}}\times (0)\times \ldots \times (0)$ are zero, $i\neq
j$. Since $\phi$ is an isomorphism, there exists $t_{ij}\in S$
such that $t_{ij}r_{i}n_{j}=0$, for every $i, j, 1\leq i<j\leq n$,
where $n_{i}=r_{i}m$ for some $r_{i}\in R$. Let $t=\Pi_{1\leq
i<j\leq n} t_{ij}$. We show that $\{(tn_{1}), \ldots , (tn_{n})\}$
is a clique of size $n$ in $AG(M)$. For every $i, j, 1\leq i<j\leq
n$, $(Rtn_{i})(Rtn_{j})=(Rtn_{j}:M)Rtn_{i}=(Rtn_{j}:M)tr_{i}M=
tr_{i}Rtn_{j}=(0)$. Since $(tn_{i})_{S}=(n_{i}/1)=N_{i}$, we
deduce that $(tn_{i})$ are distinct non-trivial submodules of $M$.
\end{proof}

\begin{cor}\label{c2.19}
For every cyclic module $M$, $cl(AG(M))\geq |Min(M)|$ and if
$|Min(M)|\geq 3$, then $gr(AG(M))=3$.
\end{cor}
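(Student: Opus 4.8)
The plan is to derive Corollary \ref{c2.19} directly from the two main results that precede it, namely Theorem \ref{t2.18} (existence of a clique of size $n$ whenever $M$ has $n$ distinct minimal prime submodules) and Theorem \ref{t2.11} (a cyclic module with $|Min(M)|\geq 3$ and nil annihilator has a cycle). The first assertion, $cl(AG(M))\geq |Min(M)|$, should be almost immediate: if $|Min(M)|=n$ is finite, then taking the full set $\{P_1,\ldots,P_n\}$ of minimal prime submodules in Theorem \ref{t2.18} produces a clique of size exactly $n$, whence $cl(AG(M))\geq n=|Min(M)|$ by definition of the clique number. If $|Min(M)|$ is infinite, the same theorem applied to arbitrarily large finite subsets of $Min(M)$ yields cliques of unboundedly large size, so $cl(AG(M))=\infty\geq|Min(M)|$ as a supremum. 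So the first half reduces to a clean application of Theorem \ref{t2.18}.

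For the second assertion, that $|Min(M)|\geq 3$ forces $gr(AG(M))=3$, I would argue as follows. Since the girth of a graph is the length of its shortest cycle (and equals $\infty$ when the graph is acyclic), it suffices to exhibit a triangle. First I would note that $gr(AG(M))\geq 3$ is automatic, since $AG(M)$ is a simple undirected graph with no loops or multiple edges, so any cycle has length at least $3$. The content is the upper bound $gr(AG(M))\leq 3$, i.e.\ the existence of a $3$-cycle. Here Theorem \ref{t2.18} does the heavy lifting directly: taking three distinct minimal prime submodules $P_1,P_2,P_3$ (available because $|Min(M)|\geq 3$), the theorem produces a clique of size $3$, which is precisely a triangle $K_3$ in $AG(M)$. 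A clique on three vertices is a $3$-cycle, so $gr(AG(M))=3$.

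The point worth emphasizing is that Theorem \ref{t2.18} carries \emph{no} nilpotency hypothesis on $Ann(M)$, so the girth conclusion here is cleaner than the cycle-existence statement of Theorem \ref{t2.11}: I would invoke Theorem \ref{t2.18} rather than Theorem \ref{t2.11}, thereby obtaining a triangle (not merely some cycle) and dispensing with the nil-ideal assumption. The only mild subtlety to check is that the clique of size $3$ consists of three \emph{distinct} vertices forming a complete subgraph—but this is guaranteed by the last line of the proof of Theorem \ref{t2.18}, which verifies that the submodules $(tn_i)$ are distinct and pairwise annihilating.

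I do not anticipate a genuine obstacle; the entire corollary is a packaging of Theorem \ref{t2.18}. The one place requiring a small remark is the degenerate possibility that the three constructed submodules might coincide or that $M$ itself enters as a vertex, but the distinctness built into Theorem \ref{t2.18} rules this out, so the triangle is bona fide and $gr(AG(M))=3$ follows.
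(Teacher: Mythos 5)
Your proof is correct and coincides with the paper's own (implicit) argument: Corollary \ref{c2.19} is stated without a separate proof precisely because it is the immediate packaging of Theorem \ref{t2.18}, exactly as you describe, with the clique of size $3$ furnishing the triangle for the girth claim.
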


\begin{thm}\label{t2.20}
Let $M$ be a cyclic module and $rad_{M}(0)=(0)$. Then $\chi(AG(M))
= cl(AG(M)) = |Min(M)|$.
\end{thm}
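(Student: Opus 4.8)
The plan is to prove the chain of (in)equalities
\[
|Min(M)| \le cl(AG(M)) \le \chi(AG(M)) = cl(AG(M))
\]
and identify the common value as $|Min(M)|$. The middle inequality $cl(AG(M))\le\chi(AG(M))$ holds for every graph, so the real content is squeezing everything back down to $|Min(M)|$ using the semiprime hypothesis $rad_M(0)=(0)$.

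First I would record the lower bound. Since $M$ is cyclic and hence a multiplication module, Corollary \ref{c2.19} (equivalently Theorem \ref{t2.18}) gives $cl(AG(M))\ge |Min(M)|$ directly, and combined with the universal $\chi\ge cl$ this yields $\chi(AG(M))\ge cl(AG(M))\ge |Min(M)|$. So it remains only to produce the matching upper bound $\chi(AG(M))\le |Min(M)|$; everything then collapses to equality.

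For the upper bound the key observation is that $rad_M(0)=(0)$ means $M$ is a semiprime module, so Corollaries \ref{c2.15} and \ref{c2.16} apply. Taking $T=R\setminus Z(M)$, Corollary \ref{c2.16} gives $\chi(AG(M))=\chi(AG(T(M)))$ and Corollary \ref{c2.14} gives $cl(AG(M))=cl(AG(T(M)))$, reducing the problem to the total quotient $T(M)=M_{T}$. The plan is to choose $S=R\setminus\bigcup_{i=1}^{n}(P_i:M)$ where $\{P_1,\ldots,P_n\}=Min(M)$, so that by Matlis-type Theorem \ref{t2.17} one has $M_S\cong M_{p_1}\times\cdots\times M_{p_n}$. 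Because $rad_M(0)=(0)$, each localization $M_{p_i}$ should be a \emph{prime} (indeed, field-like) module: the minimal primes become the unique primes after localizing, so each $M_{p_i}$ is a vertex-free or one-dimensional factor. On this finite product the graph $AG(M_S)$ is governed by Lemma \ref{l2.6} and Proposition \ref{p2.1}: the vertices are exactly the submodules supported on a proper, nonempty subset of the $n$ coordinates, and two such are adjacent precisely when their supports are disjoint. Assigning to each vertex the minimal coordinate in its support (or more carefully, coloring by a fixed coordinate in the support) produces a proper $n$-coloring, giving $\chi(AG(M_S))\le n=|Min(M)|$.

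The main obstacle I anticipate is verifying that each localized factor $M_{p_i}$ is genuinely prime (equivalently that $rad_M(0)=(0)$ forces each $M_{p_i}$ to carry no internal annihilating structure), so that adjacency in $AG(M_S)$ is controlled solely by the coordinate supports and the product-graph analysis of Proposition \ref{p2.1}(d) and Lemma \ref{l2.6} goes through. Once that is established, the coloring-by-support argument is routine and yields $\chi(AG(M_S))\le n$. Chaining the equalities $\chi(AG(M))=\chi(AG(M_S))\le n\le cl(AG(M))\le\chi(AG(M))$ forces all of them to coincide with $|Min(M)|$, completing the proof.
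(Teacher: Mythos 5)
Your proposal is correct and follows essentially the same route as the paper: the lower bound from Corollary \ref{c2.19}, the decomposition $M_{S}\cong M_{p_{1}}\times\cdots\times M_{p_{n}}$ via Theorem \ref{t2.17}, the coloring by the least nonzero coordinate, and the transfer back to $AG(M)$ via Theorem \ref{t2.13} and Corollary \ref{c2.15} (the paper goes directly through $S=R\setminus\bigcup_{i}(P_{i}:M)$, observing $S\cap Z(M)=\emptyset$, rather than detouring through $T=R\setminus Z(M)$ first). The one step you flag as an obstacle --- that each $M_{p_{i}}$ is simple --- is settled in the paper exactly as you anticipate, by combining $rad_{M}(0)=(0)$ with Lu's result that $P_{i}M_{p_{i}}$ is the unique prime submodule of $M_{p_{i}}$.
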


\begin{proof}
If $|Min(M)|=\infty$, then by Corollary \ref{c2.19}, there is
nothing to prove. Thus suppose that $|Min(M)|=\{P_{1}, ... ,
P_{n}\}$, for some positive integer $n$. Let $p_{i}=(P_{i}:M)$ and
$S = R\setminus \cup^{n}_{i=1} p_{i}$. By  Theorem \ref{t2.17}, we
have $M_{p_{1}}\times \ldots \times M_{p_{n}}\cong M_{S}$.
Clearly, $cl(AG(M_{S}))\geq n$. Now we show that
$\chi(AG(M_{S}))\leq n$. By \cite[Corollary 3]{lu95},
$P_{i}R_{p_{i}}$ is the only prime submodule of $M$ and since
$rad_{M}(0)=(0)$, every $M_{p_{i}}$ is a simple
$R_{p_{i}}$-module. Define the map $C:V(AG(M_{S})) \longrightarrow
\{1, 2, \ldots , n\}$ by $C(N_{1} \times \ldots \times N_{n})=min
\{i|$  $N_{i}\neq (0)\}$. Since each $M_{p_{i}}$ is a simple
module, $c$ is a proper vertex coloring of $AG(M_{S})$. Thus
$\chi(AG(M_{S}))\leq n$ and so $\chi(AG(M_{S})) = cl(AG(M_{S})) =
n$. Since $rad_{M}(0)=(0)$, it is easy to see that $S\cap
Z(M)=\emptyset$. Now by theorem \ref{t2.13} and Corollary
\ref{c2.15}, we obtain the desired.
\end{proof}

\begin{thm}\label{t2.21}
For every module $M$,  $cl(AG(M)) = 2$ if and only if
$\chi(AG(M))=2$.
\end{thm}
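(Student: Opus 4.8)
The plan is to prove the two directions of the biconditional $cl(AG(M)) = 2 \iff \chi(AG(M)) = 2$. Since we always have $\chi(AG(M)) \geq cl(AG(M))$ (stated in the introduction), and since a clique of size $2$ is just an edge, $cl(AG(M)) = 2$ simply asserts that $AG(M)$ has at least one edge but no triangle. Thus the only substantive direction is the forward one: assuming $AG(M)$ is triangle-free with at least one edge, I must produce a proper $2$-coloring, which amounts to showing $AG(M)$ is bipartite. The reverse direction is immediate: if $\chi(AG(M)) = 2$ then $AG(M)$ has an edge (so $cl \geq 2$) and contains no triangle (since a triangle would force $\chi \geq 3$), hence $cl(AG(M)) = 2$.

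For the forward direction, the natural strategy is to exploit the strong structural theorems already proved. A graph is bipartite if and only if it contains no odd cycle, so it suffices to rule out odd cycles in a triangle-free $AG(M)$. The most promising route is to show that when $AG(M)$ is triangle-free, it must fall into one of the highly restricted shapes we have already classified. First I would handle the case where $M$ is a vertex of $AG(M)$: as noted repeatedly in the earlier proofs, this forces $AG(M)$ to be a star graph (with $AG(M)^{*}$ empty), and a star is trivially bipartite, so $\chi = 2$. Hence assume $M$ is not a vertex. The key idea is then to show that a triangle-free $AG(M)$ has no odd cycle at all; I would try to argue that the girth/diameter constraints combined with the product decomposition $M \cong eM \times (1-e)M$ (Proposition \ref{p2.1}, Lemma \ref{l2.6}) force the graph into a bipartite form.

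Concretely, I would invoke the machinery around Lemma \ref{l2.6} and Theorem \ref{t2.7}. Recall that in the tree case, Theorem \ref{t2.7} shows $AG(M)$ is either a star or $P_{4}$, both bipartite. For the general triangle-free (but possibly cyclic) situation, the cleanest argument is to show directly that any cycle in a triangle-free $AG(M)$ has even length. Suppose $v_{0} - v_{1} - \cdots - v_{2k} - v_{0}$ is an odd cycle. Since consecutive vertices satisfy $v_{i}v_{i+1} = (0)$, I would examine the products and intersections of alternating vertices: for a shortest odd cycle, a vertex like $v_{0}v_{2}$ or $v_{0} \cap v_{2}$ is again a vertex of $AG(M)$, and in a triangle-free graph this new vertex must coincide with an existing one, producing a chord and hence a shorter odd cycle or a triangle. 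This is exactly the flavor of the chord-elimination arguments used in Claims 1--3 of Theorem \ref{t2.7}. Iterating collapses any odd cycle down to a triangle, contradicting triangle-freeness.

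The main obstacle I anticipate is making the chord-elimination argument watertight in the presence of a genuine shortest odd cycle, since the product $v_{i}v_{j}$ need not equal one of the cycle vertices and could in principle be $(0)$ or the whole of $M$. I would control this by taking a shortest odd cycle (length $\geq 5$, as length $3$ is excluded) and carefully verifying that $v_{0}v_{2}$ (or an appropriate intersection) is a nonzero proper submodule annihilating some cycle vertex, hence a vertex adjacent into the cycle, which creates a chord splitting the odd cycle into an odd and an even cycle — the odd one being strictly shorter, contradicting minimality. If the purely combinatorial collapse proves delicate, the fallback is to reduce to the Artinian/idempotent setting: use the earlier results to force a decomposition $M \cong M_{1} \times M_{2}$ and apply Lemma \ref{l2.6} together with Theorem \ref{t2.8} and Proposition \ref{p2.9}, each of which already concludes that a triangle-free (bipartite) $AG(M)$ is a star or $P_{4}$, both of which give $\chi(AG(M)) = 2$. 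Either way the conclusion $\chi(AG(M)) = 2$ follows, completing the proof.
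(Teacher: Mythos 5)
Your proposal is correct and follows essentially the same route as the paper: the reverse direction via $\chi \geq cl$, and the forward direction by taking a shortest odd cycle in a triangle-free $AG(M)$ and using the product of two non-adjacent cycle vertices as a new vertex that forces either a triangle or a shorter odd cycle. The paper's specific choice is $N_{3}N_{2k+1}$ (adjacent to both ends of the edge $N_{1}-N_{2}$, so the triangle is immediate unless the product coincides with $N_{1}$ or $N_{2}$, each case collapsing the cycle), which is just a cleaner instance of the chord-elimination you describe; your structural fallback via Lemma \ref{l2.6} is not needed.
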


\begin{proof}
For the first assertion, we use the same technique in
\cite[Theorem 13]{aa14}. Let $cl(AG(M)) = 2$. On the contrary
assume that $AG(M)$ is not bipartite. So $AG(M)$ contains an odd
cycle. Suppose that $C:= N_{1} - N_{2} - \ldots -  N_{2k+1} -
N_{1}$ be a shortest odd cycle in $AG(M)$ for some natural number
$k$. Clearly, $k\geq 2$. Since $C$ is a shortest odd cycle in
$AG(M)$, $N_{3}N_{2k+1}$ is a vertex. Now consider the vertices
$N_{1}, N_{2}$, and $N_{3}N_{2k+1}$. If $N_{1}=N_{3}N_{2k+1}$,
then $N_{4} N_{1}= (0)$. This implies that $ N_{1} - N_{4} -
\ldots - N_{2k+1} - N_{1}$ is an odd cycle, a contradiction. Thus
$N_{1}\neq N_{3}N_{2k+1}$. If $N_{2}= N_{3}N_{2k+1}$, then we have
$C_{3}= N_{2} - N_{3}$ - $N_{4} - N_{2}$, again a contradiction.
Hence $N_{2}\neq N_{3}N_{2k+1}$. It is easy to check $N_{1},
N_{2}$, and $N_{3}N_{2k+1}$ form a triangle in $AG(M)$, a
contradiction. The converse is clear.
\end{proof}

The radical of $I$, defined as the intersection of all prime
ideals containing $I$, denoted by $\sqrt{I}$. Before stating the
next theorem, we recall that if $M$ is a finitely generated
module, then $\sqrt{(Q:M)}=(rad(Q):M)$, where $Q< M$ (see \cite
{lu90} and \cite[Proposition 2.3]{lu10}). Also, we know that if
$M$ is a finitely generated module, then for every prime ideal $p$
of $R$ with $p\supseteq Ann(M)$, there exists a prime submodule
$P$ of $M$ such that $(P:M)=p$ (see \cite[Theorem 2]{lu95}).

\begin{thm}\label{t2.22}
Assume that $M$ is a finitely generated module, $Ann(M)$ is a nil
ideal, and $|Min(M)| = 1$. If $AG(M)$ is a triangle-free graph,
then $AG(M)$ is a star graph.
\end{thm}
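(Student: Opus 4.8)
The plan is to mirror the proof of Proposition \ref{p2.9}(a): first reduce to the case that $R/Ann(M)$ is Artinian, then read off that $AG(M)$ is a star or $AG(M)\cong P_{4}$, and finally exclude the path $P_{4}$ using the hypothesis $|Min(M)|=1$.

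First I would dispose of the degenerate case. If $M$ is a vertex of $AG(M)$, then (as recalled in the introduction and used at the start of Theorem \ref{t2.7}) there is a single submodule $N$ with $(N:M)=Ann(M)$, the subgraph $AG(M)^{*}$ is empty, and every vertex is adjacent to $N$; hence $AG(M)$ is a star and we are done. So assume $M$ is not a vertex of $AG(M)$. By \cite[Theorem 3.3]{ah14}, $M$ is then not a prime module, and since $AG(M)$ is triangle-free we have $cl(AG(M))\le 2$.

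The core of the argument is to deduce that $R/Ann(M)$ is Artinian; granting this, the remainder of the proof of Proposition \ref{p2.9}(a) applies, because after the Artinian reduction that proof uses the graph hypothesis only through triangle-freeness. Indeed, if $(R/Ann(M),m/Ann(M))$ is local, one takes the least $k$ with $m^{k}M=0$, and the vertex $c=m^{k-1}M$ is adjacent to every other vertex, since for any proper submodule $K$ one has $cK=(m^{k-1}M:M)(K:M)M=(K:M)\big[(m^{k-1}M:M)M\big]\subseteq (K:M)\,m^{k-1}M\subseteq m^{k}M=(0)$, using $(K:M)\subseteq m$; triangle-freeness then forbids any edge among the remaining vertices, so $AG(M)$ is a star. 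If $R/Ann(M)$ is not local, then by \cite[Theorem 8.9]{ati69}, Lemma \ref{l2.2} and Lemma \ref{l2.3} there is an idempotent $e\neq 0,1$ with $M\cong eM\times (1-e)M$, and Lemma \ref{l2.6} together with Proposition \ref{p2.1} shows that $AG(M)$ is a star or $AG(M)\cong P_{4}$. To rule out the latter, note that by Theorem \ref{t2.7} the case $AG(M)\cong P_{4}$ forces $M\cong F\times S$ with $F$ simple and $S$ having a unique non-trivial submodule; then Proposition \ref{p2.1}(e), as already observed in the proof of Theorem \ref{t2.11}, gives $|Min(M)|=2$, contradicting $|Min(M)|=1$. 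Hence $AG(M)$ is a star.

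I expect the Artinian reduction to be the main obstacle: the triangle-free hypothesis (equivalently $cl(AG(M))\le 2$) must supply the finiteness that forces $R/Ann(M)$ to be Artinian, playing the role that ``finite bipartite'' plays in Proposition \ref{p2.9}(a) via \cite[Theorem 3.6]{ah14}. That triangle-freeness is genuinely needed here is visible from $R=k[x_{1},x_{2},\dots]/(x_{i}x_{j})$, whose nilradical is its unique minimal prime yet which is not Artinian: there $\{(x_{1}),(x_{2}),(x_{3})\}$ is a triangle, so the hypothesis correctly excludes it. The cleanest route I foresee is to work over $\bar R=R/Ann(M)$, use $|Min(M)|=1$ (together with $\sqrt{Ann(M)}=(rad_{M}(0):M)$) to see that $Nil(\bar R)$ is prime, and exploit triangle-freeness to bound the socle: three distinct minimal submodules would be pairwise annihilating, since $N_{i}N_{j}\subseteq N_{i}\cap N_{j}=(0)$, hence would form a triangle, so $M$ has at most two minimal submodules. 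Propagating this finiteness through the nilpotent layers is precisely what should yield the Artinian conclusion.
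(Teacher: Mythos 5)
Your strategy diverges from the paper's at the crucial point, and the step you yourself flag as ``the main obstacle'' --- deducing that $R/Ann(M)$ is Artinian from triangle-freeness --- is not only left unproved but is actually false under the stated hypotheses. Take $R=k[x,y]/(y^{2},xy)$ and $M=R$. Then $M$ is cyclic, $Ann(M)=(0)$ is nil, and $(y)$ is the nilradical and the unique minimal prime, so $|Min(M)|=1$. Writing $f=f_{1}(x)+c_{f}y$ with $f_{1}(0)=0$ for elements of the maximal ideal, one computes $fg=f_{1}g_{1}$, so two nonzero proper ideals multiply to zero exactly when one of them is $(y)=ky$; hence $AG(M)$ is an infinite star with center $(y)$, in particular triangle-free, while $R/Ann(M)=R$ contains the strictly descending chain $(x)\supset(x^{2})\supset\cdots$ and is not Artinian. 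The analogy with Proposition \ref{p2.9}(a) breaks precisely here: there the finiteness of the vertex set (not bipartiteness) is what yields the Artinian conclusion via \cite[Theorem 3.6]{ah14}, and triangle-freeness places no bound on the number of vertices. Your socle bound (at most two minimal submodules, since distinct minimal submodules satisfy $N_{i}N_{j}\subseteq N_{i}\cap N_{j}=(0)$) is correct but, as the example shows, cannot be ``propagated through the nilpotent layers'' to force the descending chain condition.

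The paper's proof avoids any Artinian reduction. It argues directly: from $rm=0$ it produces a minimal submodule $Rm$ with $(Rm)^{2}=(0)$, uses triangle-freeness together with Lemma \ref{l2.4} and $|Min(M)|=1$ to show $Rm$ is the \emph{unique} minimal submodule, then proves that every edge of $AG(M)$ has one end annihilating $Rm$ and the other end containing $Rm$, and finally shows (using the unique minimal prime $P$ and the identity $Nil(R)=(P:M)$) that the set of neighbors of $Rm$ not containing $Rm$ is empty, so that $AG(M)$ is a star with center $Rm$. The parts of your argument that do work (the degenerate case where $M$ is a vertex, and the exclusion of $P_{4}$ via $|Min(F\times S)|=2$) survive, but the core of the proof has to be replaced by an argument of this direct kind rather than a reduction to Proposition \ref{p2.9}(a).
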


\begin{proof}
Suppose first that $P$ is the unique minimal prime submodule of
$M$. Since $M$ is not a vertex of $AG(M)$, hence $Z(M)\neq (0)$.
So there exist non-zero elements $r\in R$ and $m\in M$ such that
$rm=0$. It is easy to see that $rM$ and $Rm$ are vertices of
$AG(M)$ because $(rM)(Rm)=0$. Since $AG(M)$ is triangle-free, $Rm$
or $rM$ is a minimal submodule of $M$. Without loss of generality,
we can assume that $Rm$ is a minimal submodule of $M$ so that
$(Rm)^{2}=(0)$ (if $rM$ is a minimal submodule of $M$, then there
exists $0\neq m'\in M$ such that $rM=Rm'$). We claim that $Rm$ is
the unique minimal submodule of $M$. On the contrary, suppose that
$K$ is another minimal submodule of $M$. So either $K^{2} = K$ or
$K^{2} = 0$. If $K^{2} = K$, then by Lemma \ref{l2.4}, $K = eM$
for some idempotent element $e\in R$ and hence $M \cong eM \times
(1-e)M$. This implies that $|Min(M)| > 1$, a contradiction. If
$K^{2} = 0$, then we have $C_{3}= K - (K:M)M + (Rm:M)M - Rm - K$,
a contradiction. So $Rm$ is the unique minimal submodule of $M$.
Let $V_{1}=N((Rm))$, $V_{2}=V(AG(M))\setminus V_{1}$, $A=\{K\in
V_{1}|(Rm)\subseteq K\}$, $B=V_{1}\setminus A$, and
$C=V_{2}\setminus
 \{Rm\}$. We prove
that $AG(M)$ is a bipartite graph with parts $V_{1}$ and $V_{2}$.
We may assume that $V_{1}$ is an independent set because $AG(M)$
is triangle-free. We claim that one end of every edge of $AG(M)$
is adjacent to $Rm$ and another end contains $Rm$. To prove this,
suppose that $\{N, K\}$ is an edge of $AG(M)$ and $Rm \neq N$, $Rm
\neq K$. Since $N(Rm)\subseteq Rm$, by the minimality of $Rm$,
either $N(Rm)=(0)$ or $Rm\subseteq N$. The latter case follows
that $K(Rm)=(0)$. If $N(Rm)=(0)$, then $K(Rm)\neq (0)$ and hence
$Rm\subseteq K$. So our plain is proved. This gives that $V_{2}$
is an independent set and $N(C) \subseteq V_{1}$. Since every
vertex of $A$ contains $Rm$ and $AG(M)$ is triangle-free, all
vertices in $A$ are just adjacent to $Rm$ and so by \cite[Theorem
3.4]{ah14}, $N(C) \subseteq B$. Since one end of every edge is
adjacent to $Rm$ and another end contains $Rm$, we also deduce
that every vertex of $C$ contains $Rm$ and so every vertex of $A
\cup V_{2}$ contains $Rm$. Note that if $Rm = P$, then one end of
each edge of $AG(M)$ is contained in $Rm$ and since $Rm$ is a
minimal submodule of $M$, $AG(M)$ is a star graph with center
$Rm=P$. Now, suppose that $P\neq Rm$. We claim that $P\in A$.
Since $Rm\subseteq P$, it suffices to show that $(Rm)P=(0)$. To
see this, let $r\in (P:M)$. We prove that $rm = 0$. Clearly,
$(Rrm)\subseteq Rm$. If $rm = 0$, then we are done. Thus $Rrm =
Rm$ and so $m=rsm$ for some $s\in R$. We have $m(1-rs) = 0$. By
\cite[Theorem 2]{lu95}, we have $Nil(R)=(P:M)$ (note that
$\sqrt{Ann(M)}=(rad(0):M)=(P:M)$). Therefore $1-rs$ is unit, a
contradiction, as required. Since $N(C)\subseteq B$, if $B =
\emptyset$, then $C = \emptyset$ and so $AG(M)$ is a star graph
with center $Rm$. It remains to show that $B = \emptyset$. Suppose
that $K\in B$ and consider the vertex $K \cap P$ of $AG(M)$. Since
every vertex of $A\cup V_{2}$ contains $Rm$, yields $K \cap P\in
B$. Pick $0 \neq m'\in K \cap P$. Since $AG(M)$ is triangle-free,
one can find an element $m''\in Rm'$ such that $Rm''$ is a minimal
submodule of $M$ and $(Rm'')^{2} = (0)$. Since $Rm$ is the unique
minimal submodule of $M$, we have $Rm = Rm''\subseteq Rm'$. Thus
$Rm\subseteq K\cap P$, a contradiction. So $B = \emptyset$ and we
are done. Hence $AG(M)$ is a star graph whose center is $Rm$, as
desired.
\end{proof}

\begin{cor}\label{c2.23}
Assume that $M$ is a finitely generated module, $Ann(M)$ is a nil
ideal, and $|Min(M)| = 1$. If $AG(M)$ is a bipartite graph, then
$AG(M)$ is a star graph.
\end{cor}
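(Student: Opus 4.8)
The plan is to recognize that the hypothesis of this corollary is strictly stronger than that of Theorem \ref{t2.22}, so the corollary should follow as an immediate specialization. The three standing assumptions---$M$ finitely generated, $Ann(M)$ a nil ideal, and $|Min(M)| = 1$---are identical in both statements; the only difference is that Theorem \ref{t2.22} assumes $AG(M)$ is triangle-free, whereas here I assume $AG(M)$ is bipartite. The key observation, then, is purely graph-theoretic: every bipartite graph is triangle-free.

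The reasoning for that observation is standard. A bipartite graph admits a partition of its vertex set into two independent sets $U$ and $V$ with every edge joining $U$ to $V$; equivalently, a graph is bipartite precisely when it contains no odd cycle. Since a triangle $C_3$ is an odd cycle, a bipartite graph can contain no triangle, i.e.\ it is triangle-free. Thus, under the assumption that $AG(M)$ is bipartite, $AG(M)$ is automatically triangle-free.

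With this in hand, the proof is a one-line reduction: the hypotheses of Theorem \ref{t2.22} are all satisfied, so I would apply that theorem directly to conclude that $AG(M)$ is a star graph, which is exactly the assertion of the corollary. There is essentially no obstacle to overcome here, since all of the genuine module- and ring-theoretic content---the existence of a unique minimal submodule $Rm$ with $(Rm)^2 = (0)$, the bipartite decomposition with $Rm$ adjacent to or contained in the endpoints of each edge, and the delicate argument that the set $B$ is empty---was already carried out in the proof of Theorem \ref{t2.22}. The only point worth stating explicitly is the implication ``bipartite $\Rightarrow$ triangle-free,'' after which the result is immediate.
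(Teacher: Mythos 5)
Your proposal is correct and matches the paper's intent exactly: the corollary is stated without proof precisely because bipartite graphs contain no odd cycles, hence no triangles, so Theorem \ref{t2.22} applies verbatim. Nothing further is needed.
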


\vspace{2mm} \noindent \footnotesize
\begin{minipage}[b]{10cm}
Department of pure Mathematics \\
Faculty of mathematical Sciences, \\
University of Guilan, P. O. Box 41335-19141, Rasht, Iran \\
E-mail: ansari@guilan.ac.ir
\end{minipage}

\vspace{2mm} \noindent \footnotesize
\begin{minipage}[b]{10cm}
Department of pure Mathematics \\
Faculty of mathematical Sciences, \\
University of Guilan, P. O. Box 41335-19141, Rasht, Iran \\
E-mail: sh.habibi@phd.guilan.ac.ir
\end{minipage}

\end{document}